\newtheorem{theorem}{Theorem}
\newtheorem{corollary}{Corollary}
\newtheorem{example}{Example}
\newtheorem{lemma}[theorem]{Lemma}
\newtheorem{proposition}[theorem]{Proposition}
\newtheorem{remark}[theorem]{Remark}
\newenvironment{acknowledgements} {\begin{abstract}} {\end{abstract}}
\newcommand{\xmath}[1]{\ensuremath{#1}\xspace}
\newcommand{\Pg}{\xmath{P_{_{\mathnormal{GEV}}}}}
\newcommand{\Fa}{\xmath{\bar{F}_{\alpha,\delta}}}
\newcommand{\Fb}{\xmath{\bar{F}_{1,\delta}}}
\newcommand{\Pref}{\xmath{P_{_{\mathnormal{ref}}}}}
\newcommand{\Eref}{\xmath{E_{_{\Pref}}}}
\newcommand{\Ep}{\xmath{E_{_P}}}
\newcommand{\Eq}{\xmath{E_{_Q}}}
\newcommand{\bd}{\xmath{\bar{\delta}}}
\newcommand{\var}{\xmath{\textnormal{VaR}}}
\newcommand{\gref}{\xmath{\gamma_{_{\mathnormal{ref}}}}}
\begin{document}

\title{On Distributionally Robust Extreme Value Analysis}


\author{Jose Blanchet, Fei He, Karthyek Murthy}

\address{ Jose Blanchet: Management Science and Engineering, Stanford University, 475 via Ortega, Suite 310, Stanford, CA 94305.  Fei He: Department of IEOR, Columbia University, 500 West 120th Street, New York, NY 10027. Karthyek Murthy: Engineering Systems and Design, Singapore University of Technology and Design, 8 Somapah Road, Singapore, 487372. }

\email{jose.blanchet@stanford.edu, fh2293@columbia.edu, karthyek\_murthy@sutd.edu.sg} 


\date{}

\begin{abstract}
  We study distributional robustness in the context of Extreme Value
  Theory (EVT). We provide a data-driven method for
  estimating extreme quantiles in a manner that is robust against
  incorrect model assumptions underlying the application of the
  standard Extremal Types Theorem. Typical studies in distributional
  robustness involve computing worst case estimates over a model
  uncertainty region expressed in terms of the Kullback-Leibler
  discrepancy. We go beyond standard distributional robustness in that
  we investigate different forms of discrepancies, and prove rigorous
  results which are helpful for understanding the role of a putative
  model uncertainty region in the context of extreme quantile
  estimation. Finally, we illustrate our data-driven method in various
  settings, including examples showing how standard EVT can
  significantly underestimate quantiles of interest.
  
\smallskip
\noindent \textbf{Keywords.} Distributional robustness \and Generalized extreme value
  distributions \and KL-divergence \and R\'{e}nyi divergence \and
  Quantile estimation
\end{abstract}

\maketitle

\section{Introduction}
Extreme Value Theory (EVT) provides reasonable statistical principles which
can be used to extrapolate tail distributions, and, consequently, estimate
extreme quantiles. However, as with any form for extrapolation, extreme
value analysis rests on assumptions that are rather difficult (or
impossible)\ to verify. Therefore, it makes sense to provide a mechanism to
robustify the inference obtained via EVT.

The goal of this paper is to study non-parametric distributional
robustness (i.e. finding the worst case distribution within some
discrepancy of a natural baseline model) in the context of EVT. We
ultimately provide a data-driven method for estimating extreme
quantiles in a manner that is robust against possibly incorrect model
assumptions. Our objective here is different from standard statistical
robustness which is concerned with data contamination only (not model
error); see, for example, \cite{Tsai2010}, for this type of analysis
in the setting of EVT. 

Our focus in this paper is closer in spirit to distributionally robust
optimization as in, for instance,
\cite{dupuis2000robust, Hans_Sarg, Ben_Tal, MAFI:MAFI12050}.  However, in
contrast to the 
literature on robust optimization, the emphasis here is on
understanding the implications of distributional uncertainty regions
in the context of EVT. As far as we know this is the first paper that
studies distributional robustness in the context of
EVT. 

We now describe the content of the paper, following the logic which
motivates the use of EVT.

\subsection{Motivation and Standard Approach}

In order to provide a more detailed description of the content of this
paper, its motivations, the specific contributions, and the methods
involved, let us invoke a couple of typical examples which motivate
the use of extreme value theory. As a first example, consider the
problem of forecasting the necessary strength that is required for a
skyscraper in New York City to withstand a wind speed that gets
exceeded only about once in 1000 years, using wind speed data that is
observed only over the last 200
years. 
In another instance, given the losses observed during the last few
decades, a reinsurance firm may want to compute, as required by
Solvency II standard, a capital requirement that is needed to
withstand all but about one loss in 200 years.

These tasks, and many others in practice, present a common challenge
of extrapolating tail distributions over regions involving unobserved
evidence from available observations. There are many reasonable ways
of doing these types of extrapolations. One might take advantage of
physical principles and additional information, if available, in the
windspeed setting; or use economic principles in the reinsurance
setting. In the absence of any fundamental principles which inform
tail extrapolation of a random variable $X,$ one may opt to use purely
statistical considerations. 

One such statistical approach entails the application of the popular
extremal types theorem (see Section \ref{SEC-EVT-STANDARD}) to model
the distribution of block maxima of a modestly large number of samples
of $X,$ by a generalized extreme value (GEV) distribution. Once we
have a satisfactory model for the distribution of
$M_n = \max\{X_1,\ldots,X_n\},$ evaluation of any desired quantile of
$X$ is straighforward because of the relationship that
$P(M_n \leq x) = (P(X \leq x))^n$ for any $x \in \mathbb{R}.$ Another
common approach is to use samples that exceed a certain threshold to
model conditional distribution of $X$ exceeding the threshold. The
standard texts in extreme value theory (see, for example,
\cite{MR691492,MR2234156,MR2364939}) provide a comprehensive account
of such standard statistical approaches.

Regardless of the technique used, various assumptions underlying an
application of a result similar to the extremal types theorem might be
subject to model error. Consequently, it has been widely accepted that
tail risk measures, particularly for high confidence levels, can only
be estimated with considerable statistical as well as model
uncertainty (see, for example, \cite{jorion2006value}).  The following
remark due to \cite{MR1932132} holds significance in this discussion:
\textquotedblleft Though the GEV model is supported by mathematical
argument, its use in extrapolation is based on unverifiable
assumptions, and measures of uncertainty on return levels should
properly be regarded as \textit{lower bounds} that could be much
greater if uncertainty due to model correctness were taken into
account.\textquotedblright

Despite these difficulties, however, EVT is widely used (see, for example,
\cite{MR2234156}) and regarded as a reasonable way of extrapolation to
estimate extreme quantiles.

\subsection{Proposed Approach Based on Infinite Dimensional Optimization}
We share the point of view that EVT is a reasonable approach, so we
propose a procedure that builds on the use of EVT to provide upper
bounds which attempts to address the types of errors discussed in the
remark above from \cite{MR1932132}.  For large values of $n$, under
the assumptions of EVT, the distribution of $M_{n}$ lies close to, and
appears like, a GEV distribution. Therefore, instead of considering
only the GEV distribution as a candidate model, we propose a
non-parametric approach. In particular, we consider a family of
probability models, all of which lie in a \textquotedblleft
neighborhood\textquotedblright\ of a GEV model, and compute a
conservative worst-case estimate of Value atrisk (VaR) over all of
these candidate models. For $p \in [0,1],$ the value at risk
$\var_p(X)$ is defined as 
\[ \var_p(X) = F^{\leftarrow}(p) := \inf\{ x: P\{ X \leq x\} \geq p
\}.\]

Mathematically, given a reference model, $\Pref$, which we consider to
be obtained using EVT (using a procedure such as the one outlined in the
previous subsection), we consider the optimization problem
\begin{equation}
\sup \bigg\{P \{X>x\}:\ d(P, \Pref\xspace)\leq \delta %
\bigg\}.
\label{P-FAM}
\end{equation}%
Note that the previous problem proposes optimizing over all probability
measures that are within a tolerance level $\delta $ (in terms of a suitable
discrepancy measure $d$) from the chosen baseline reference model $\Pref.$

There is a wealth of literature that pursues this line of thought (see
\cite{dupuis2000robust,
  Hans_Sarg, Ahmadi-Javid2012, Ben_Tal,MAFI:MAFI12050,Glasserman_Xu}%
), but, no study has been carried out in the context of EVT. Moreover,
while the solvability of problems as in \eqref{P-FAM} have
understandably received a great deal of attention, the qualitative
differences that arise by using various choices of discrepancy
measures, $d$, has not been explored, and this is an important
contribution of this paper. For tractability reasons, the usual choice
for discrepancy $d$ in the literature has been
KL-divergence. 
In Section \ref{SEC-NONPARAM-ROB} we study the solution to infinite
dimensional optimization problems such as \eqref{P-FAM} for a large
class of discrepancies that includes KL-divergence as a special 
case, and discuss how such problems can be solved at no significant
computational cost.

\subsection{Choosing Discrepancy and Consistency Results}

One of our main contributions in this paper is to systematically demonstrate
the qualitative differences that arise by using different choices of
discrepancy measures $d$ in \eqref{P-FAM}. Since our interest in the paper
is limited to robust tail modeling via EVT, this narrow scope, in turn, let
us analyse the qualitative differences that may arise because of different
choices of $d$.

As mentioned earlier, the KL-divergence\footnote{%
  KL-divergence, and all other relevant divergence measures, are
  defined in Section \ref{SEC-DIV-MEAS}} is the most popular choice
for $d$. In Section %
\ref{SEC-ASYMP-CHAR} we show that for any divergence neighborhood
$\mathcal{P%
}$,
defined using $d=$ KL-divergence around a baseline reference $\Pref$,
there exists a probability measure $P$ in $\mathcal{P}$ that has tails
as heavy as
\begin{equation*}
P(x,\infty )\geq c\log ^{-2}\Pref\xspace(x,\infty ),
\end{equation*}%
for a suitable constant $c$, and all large enough $x.$ This means,
irrespective of how small $\delta $ is (smaller $\delta $ corresponds to
smaller neighborhood $\mathcal{P}$), a KL-divergence neighborhood around a
commonly used distribution (such as exponential, (or) Weibull (or) Pareto)
typically contains tail distributions that have infinite mean or variance,
and whose tail probabilities decay at an unrealistically slow rate (even
logarithmically slow, like $\log ^{-2}x$, in the case of reference models
that behave like a power-law or Pareto distribution). As a result,
computations such as worst-case expected short-fall\footnote{%
Similar to VaR, expected shortfall (or) conditional value at risk (referred
as CVaR) is another widely recognized risk measure.} may turn out to be
infinite. Such worst-case analyses are neither useful nor interesting.%

For our purposes, we also consider Renyi divergence
measures $D_{\alpha }$ (see Section \ref{SEC-DIV-MEAS}) that includes KL-divergence as a special case (when
$\alpha =1$%
). It turns out that for any $\alpha >1$, the divergence neighborhoods
defined as in
$\{P:D_{\alpha }(P,P_{_{\mathnormal{ref}}}\xspace)\leq \delta \}$
consists of tails that are heavier than
$P_{_{\mathnormal{ref}}}\xspace$%
, but not prohibitively heavy. More importantly, we prove a
\textquotedblleft consistency\textquotedblright\ result in the sense
that if the baseline reference model belongs to the maximum domain of
attraction of a GEV distribution with shape parameter
$\gamma _{_{\mathnormal{ref}}}\xspace,$ then the corresponding
worst-case tail distribution,
\begin{equation}
\Fa(x):=\sup \{P(x,\infty ):D_{\alpha }(P,P_{_{%
\mathnormal{ref}}}\xspace)\leq \delta \},  \label{EQ_WHOLE_TAIL}
\end{equation}%
belongs to the maximum domain of attraction of a GEV distribution with
shape parameter
$\gamma ^{\ast }=(1-\alpha ^{-1})^{-1}\gamma _{_{\mathnormal{ref}}}%
\xspace$ (if it exists).

Since our robustification approach is built resting on EVT principles,
we see this consistency result as desirable. If a modeler who is
familiar with certain type of data expects the EVT inference to result
in an estimated shape parameter which is positive, then the
robustification procedure should preserve this qualitative
property. An analysis of the maximum domain of attraction of the
distribution $\bar{F}_{\alpha }\xspace(x)$, depending on $\alpha $ and
$\gref,$ is presented in Section \ref{SEC-ASYMP-CHAR}, along with a
summary of the results in Table 1.

Note that the smaller the value of $\alpha $, the larger the absolute
value of shape parameter $\gamma^{\ast }$, and consecutively, heavier
the corresponding worst-case tail is. This indicates a gradation in
the rate of decay of worst-case tail probabilities as parameter
$\alpha $ decreases to 1, with the case $\alpha =1$ (corresponding to
KL-divergence) representing the extreme heavy-tailed behaviour. This
gradation, as we shall see, offers a great deal of flexibility in
modeling by letting us incorporate domain knowledge (or) expert
opinions on the tail behaviour. If a modeler is suspicious about the
EVT inference he/she could opt to select $\alpha =1$, but, as we have
mentioned earlier, this selection may result in pessimistic estimates.

The relevance of these results shall become more evident as we
introduce the required terminology in the forthcoming
sections. Meanwhile, Table \ref%
{TAB-DA-DIFF-CASES} 
offers illustrative comparisons of $\bar{F}_{\alpha }\xspace(x)$ for various
choices of $\alpha.$

\subsection{The Final Estimation Procedure}
The framework outlined in the previous subsections yields a data
driven procedure for estimating VaR which is presented in Section
\ref{SEC-ROB-VAR-EST}. A summary of the overall procedure is given in
Algorithm 2. The procedure is applied to various data sets, resulting
in different reference models, and we emphasize the choice of
different discrepancy measures via the parameter $\alpha $. The
numerical studies expose the salient points discussed in the previous
subsections and rigorously studied via our theorems. For instance,
Example 3 shows how the use of the KL divergence might lead to rather
pessimistic estimates. Moreover, Example 4 illustrates how the direct
application of EVT can severely underestimate the quantile of
interest, while the procedure that we advocate provides correct
coverage for the extreme quantile of interest.

The very last section of the paper, Section \ref{SEC-THM-PROOF}, contains
technical proofs of various results invoked in the development.

\section{Generalized extreme value distributions}
\label{SEC-EVT-STANDARD}
The objective of this section is to mainly fix notation and review
properties of generalized extreme value (GEV) distributions that are
relevant for introducing and proving our main results in Section
\ref{SEC-ASYMP-CHAR}. For a thorough introduction to GEV distributions
and their applications to modeling extreme quantiles, we refer the
readers to the wealth of literature that is available (see, for
example, \cite{MR691492, EKM97, MR2234156, MR2364939} and references
therein). 

If we use $M_n$ to denote the maxima of $n$ independent copies of a
random variable $X$ with cumulative distribution funtion $F(\cdot),$
then extremal types theorem identifies all non-degenerate
distributions $G(\cdot)$ that may occur in the limiting relationship,
\begin{align}
  \label{MAX_LIM_DIST}
  \lim_{n \rightarrow \infty} P \left\{ \frac{M_n - b_n}{a_n} \leq x
  \right\} = \lim_{n \rightarrow \infty} F^n\left( a_n x + b_n \right)
  = G(x),  
\end{align}
for every continuity point $x$ of $G(\cdot),$ with $a_n$ and $b_n$
representing suitable scaling constants.  All such distributions
$G(x)$ that occur in the right-hand side of \eqref{MAX_LIM_DIST}
are called \textit{extreme value distributions}.\\
 
\noindent
\textbf{Extremal types theorem} (Fisher and Tippet (1928),
Gnedenko (1943)). The class of extreme value distributions is $G_\gamma
(ax + b)$ with $a > 0, b, \gamma \in \mathbb{R},$ and 
\begin{align}
\label{EV-DISTS}
 G_\gamma(x) := \exp \left( -\left( 1 + \gamma x
  \right)^{-1/\gamma}\right), \quad\quad 1 + \gamma x > 0.
\end{align}
If $\gamma = 0,$ the right-hand side is interpreted as
$\exp(-\exp(-x)).$\\

\noindent The extremal types theorem asserts that any $G(x)$ that
occurs in the right-hand side of \eqref{MAX_LIM_DIST} must be of the
form $G_\gamma(ax+b).$ As a convention, any probability distribution
$F(x)$ that gives rise to the limiting distribution
$G(x) = G_\gamma(ax + b)$ in \eqref{MAX_LIM_DIST} is said to belong to
the maximum domain of attraction of $G_\gamma(x).$ In short, it is
written as $F \in \mathcal{D}(G_\gamma).$ The parameters
$\gamma, a>0 $ and $b$ are, respectively, called the shape, scale and
location parameters. From the above we have
\[
  P(M_n\leq x) = P\Big(\frac{M_n - b_n}{a_n}\leq \frac{x - b_n}{a_n}\Big) \approx G_{\gamma_0}\Big(\frac{x - b_n}{a_n}\Big) =: G_{\gamma_0}(a_0 x+b_0),
\]
where $\gamma_0, a_n, b_n$ are estimated by a parameter estimation technique such as maximum likelihood and $a_0 := 1/a_n$, $b_0 := -b_n/a_n$. We will use $P_{GEV}$ to denote the distribution $G_{\gamma_0}(a_0 x+b_0)$.

\subsection{Frechet, Gumbel and Weibull types}
Though the limiting distributions $G_\gamma(ax+b)$ seem to constitute
a simple parametric family, they include a wide-range of tail
behaviours in their maximum domains of attraction, as discussed below:
For a distribution $F,$ let $\bar{F}(x) = 1-F(x)$ denote the
corresponding tail probabilities, and $x_{_F}^* = \sup\{x: F(x) < 1\}$
denote the right endpoint of its support.
\begin{itemize}
\item[1)] \textbf{The Frechet Case $(\gamma > 0).$}  A distribution
  $F \in \mathcal{D}(G_\gamma)$ for some $\gamma > 0,$ if and only if 
  right endpoint $x_{_F}^*$ is unbounded, and its tail probabilities
  satisfy 
  \begin{align}
    \label{FRECH-CHAR}
    \bar{F}(x) = \frac{L(x)}{x^{1/\gamma}}, \quad\quad x > 0
  \end{align}
  for a function $L(\cdot)$ slowly varying at $\infty$\footnote{A
    function $L: \mathbb{R} \rightarrow \mathbb{R}$ is said to be
    slowly varying at infinity if
    $\lim_{x \rightarrow \infty} L(tx)/L(x) = 1$ for every $t > 0.$
    Common examples of slowly varying function include
    $\log x, \log \log x,1-\exp(-x),$ constants, etc.}.  As a
  consequence, moments greater than or equal to $1/\gamma$ do not
  exist. Any distribution $F(x)$ that lies in $\mathcal{D}(G_\gamma)$
  for some $\gamma > 0$ is also said to belong to the maximum domain
  of attraction of a Frechet distribution with parameter $1/\gamma.$
  The Pareto distribution $1-F(x) = x^{-\alpha} \wedge 1$ is an
  example for a distribution that belongs to
  $\mathcal{D}(G_{1/\alpha}).$ 

\item[2)] \textbf{The Weibull case $(\gamma < 0).$} Unlike the Frechet
  case, a distribution $F \in \mathcal{D}(G_\gamma)$ for some
  $\gamma < 0,$ if and only if its right endpoint $x_{_F}^*$ is finite,
  and its tail probabilities satisfy
    \begin{align}
      \label{WEIB-CHAR}
      \bar{F}(x_{_F}^*- \epsilon) =
      \epsilon^{-1/\gamma}L\left(\frac{1}{\epsilon}\right), 
      \quad\quad \epsilon > 0
    \end{align}
    for 
    a function $L(\cdot)$ slowly varying at $\infty.$ A distribution
    that belongs to $\mathcal{D}(G_\gamma)$ for some $\gamma < 0$ is
    also said to belong to the maximum domain of attraction of Weibull
    family.  The uniform distribution on the interval $[0,1]$ is an
    example that belongs to this class of extreme value distributions.

  \item[3)] \textbf{The Gumbel case $(\gamma = 0).$} A distribution $F
    \in \mathcal{D}(G_0)$ if and only if 
  \begin{align}
    \lim_{t \uparrow x_{_F}^*} \frac{\bar{F}(t+xf(t))}{\bar{F}(t)} =
    \exp(-x), \quad\quad x \in \mathbb{R}
    \label{GUMB-CHAR}
  \end{align}
  for a suitable positive function $f(\cdot).$ In general, the members
  of $G_0$ have exponentially decaying tails, and consequently, all
  moments exist. Probability distributions $F(\cdot)$ that give rise
  to limiting distributions $G_0(ax + b)$ are also said to belong to
  the Gumbel domain of attraction. Common examples that belong to the
  Gumbel domain of attraction include exponential and normal
  distributions.
\end{itemize}
Given a distribution function $F,$ Proposition
\ref{PROP-EXTREMES-CHAR} is useful to test to determine its domain
of attraction:
\begin{proposition}
  \textnormal{Suppose $F''(x)$ exists and $F'(x)$ is positive for all
    $x$ in some left neighborhood of $x_{_F}^*.$ If
  \begin{align}
    \lim_{x \uparrow x_{_F}^*} \left( \frac{1-F}{F'} \right)'\hspace{-5pt}(x) =
    \gamma,
\end{align}
then $F$ belongs to the domain of attraction of $G_\gamma.$}
 \label{PROP-EXTREMES-CHAR}
\end{proposition}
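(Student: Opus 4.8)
The statement is the classical von Mises sufficient condition for membership in a domain of attraction, and the plan is to derive it from the integral representation of the tail through the hazard rate. First I would set $f = F'$ and work on a left neighbourhood $(x_0, x_{_F}^*)$ on which $F''$ exists and $f > 0$; there the function $g := \bar{F}/f$ is positive and differentiable, and the hypothesis reads $g'(x) \to \gamma$ as $x \uparrow x_{_F}^*$. Since $\big(\log \bar{F}\big)' = -f/\bar{F} = -1/g$, integrating gives
\[
  \bar{F}(x) \;=\; \bar{F}(x_0)\,\exp\!\left(-\int_{x_0}^{x}\frac{dt}{g(t)}\right),
  \qquad x_0 \le x < x_{_F}^*,
\]
and the whole proof consists of feeding the asymptotics of $g$ --- which are forced by $g' \to \gamma$ --- into this identity and matching the outcome with the characterisations \eqref{FRECH-CHAR}, \eqref{WEIB-CHAR}, \eqref{GUMB-CHAR}.

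For $\gamma > 0$ I would first argue $x_{_F}^* = \infty$: otherwise $g' \to \gamma > 0$ makes $g$ eventually increasing and trapped between two positive constants near $x_{_F}^*$, so the integral above converges and $\bar{F}(x_{_F}^*) > 0$, a contradiction. Writing $g(x) = g(x_0) + \int_{x_0}^x g'$ then gives $g(x)/x \to \gamma$ by Ces\`aro averaging; substituting $1/g(t) = 1/(\gamma t) + \eta(t)/t$ with $\eta(t) \to 0$ into the identity yields $\bar{F}(x) = x^{-1/\gamma}L(x)$, and the bound $\big|\log\{L(tx)/L(x)\}\big| = \big|\int_x^{tx}\eta(s)\,ds/s\big| \le \log t \cdot \sup_{s \ge x}|\eta(s)| \to 0$ shows $L$ is slowly varying, which is \eqref{FRECH-CHAR}. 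The case $\gamma < 0$ is symmetric: $g' \to \gamma < 0$ forces $g$ to decrease, hence $x_{_F}^* < \infty$ (otherwise $g \to -\infty$) and $g(x) \to 0$ as $x \uparrow x_{_F}^*$ (otherwise the integral again converges, contradicting $\bar F(x_{_F}^*)=0$); then $g(x) = -\int_x^{x_{_F}^*}g'$ gives $g(x)/(x_{_F}^*-x) \to -\gamma$, and substituting $\epsilon = x_{_F}^*-x$ exactly as before produces $\bar{F}(x_{_F}^*-\epsilon) = \epsilon^{-1/\gamma}L(1/\epsilon)$ with $L$ slowly varying, i.e. \eqref{WEIB-CHAR}.

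For $\gamma = 0$ I would take the auxiliary function in \eqref{GUMB-CHAR} to be $g$ itself. The point is that $g' \to 0$ makes $g$ self-neglecting: since $|g(t+ug(t)) - g(t)| \le |u|\,g(t)\sup_{[t,\,t+ug(t)]}|g'|$ and $t+ug(t) \uparrow x_{_F}^*$ as $t \uparrow x_{_F}^*$ (when $x_{_F}^* < \infty$ one first notes $g(t) = o(x_{_F}^*-t)$, again because $g \to 0$ with $g' \to 0$), one gets $g(t+ug(t))/g(t) \to 1$ uniformly for $u$ in any bounded interval. Substituting $s = t + ug(t)$ in the tail identity then gives
\[
  \frac{\bar{F}\big(t + x\,g(t)\big)}{\bar{F}(t)}
  = \exp\!\left(-\int_0^x \frac{g(t)}{g\big(t+u\,g(t)\big)}\,du\right)
  \;\longrightarrow\; e^{-x}
  \qquad (t \uparrow x_{_F}^*),
\]
for each $x$, which is \eqref{GUMB-CHAR}.

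The main obstacle is not the computation but the case-by-case endpoint analysis of $g$: showing $x_{_F}^* = \infty$ when $\gamma > 0$, $x_{_F}^* < \infty$ when $\gamma < 0$, and the self-neglecting property when $\gamma = 0$ --- in each case one has to rule out degenerate behaviour of $g$ near $x_{_F}^*$ using the tail identity together with the sign of $\gamma$. Everything else reduces to Ces\`aro averaging of $g'$ and routine bookkeeping with slowly varying functions.
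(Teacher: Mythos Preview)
The paper does not prove this proposition: immediately after stating it, the authors write that ``proofs of the above statements can be found in a standard text on extreme value theory (see, for example, \cite{MR691492} or \cite{MR2234156}).'' So there is nothing to compare against --- Proposition~\ref{PROP-EXTREMES-CHAR} is simply quoted as the classical von Mises sufficient condition.

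Your sketch \emph{is} the standard textbook argument (essentially what one finds in de Haan--Ferreira, Theorem~1.1.8): represent the tail through the reciprocal hazard $g=\bar F/f$ via $\bar F(x)=\bar F(x_0)\exp\bigl(-\int_{x_0}^x g^{-1}\bigr)$, use $g'\to\gamma$ and Ces\`aro averaging to extract the asymptotics of $g$, and then read off the regular-variation characterisations \eqref{FRECH-CHAR}, \eqref{WEIB-CHAR} or the Gumbel relation \eqref{GUMB-CHAR} with auxiliary function $g$. The case split and the self-neglecting argument for $\gamma=0$ are handled correctly.

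One small point worth tightening: in the $\gamma>0$ and $\gamma<0$ endpoint arguments you invoke ``$\bar F(x_{_F}^*{}^-)>0$, a contradiction'' and ``$g\to 0$ (otherwise the integral converges, contradicting $\bar F(x_{_F}^*)=0$)''. These contradictions require $\bar F(x_{_F}^*{}^-)=0$, i.e.\ no atom at the right endpoint. That is \emph{not} a consequence of the left-neighbourhood hypotheses as literally stated; it is the usual tacit assumption that $F$ is (absolutely) continuous, which the cited textbooks make explicit. If you want the argument to be self-contained, either add continuity of $F$ at $x_{_F}^*$ to the hypotheses or note that an atom there forces a degenerate limit for $M_n$ and so excludes membership in any $\mathcal D(G_\gamma)$.
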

The proof of Proposition \ref{PROP-EXTREMES-CHAR} and further details
on the classification of extreme value distributions can be found in
any standard text on extreme value theory (see, for example,
\cite{MR691492} or \cite{MR2234156}).

\subsection{On model errors and robustness}
After identifying a suitable GEV model $\Pg$ for the distribution of
block maxima $M_n$, it is common to utilize the relationship
$P\{ M_n \leq x\} = P\{ X \leq x\}^n,$ to compute a desired extreme
quantile of $X.$ It is useful to remember that $\Pg(-\infty,x]$ is
only an approximation for $P\{ M_n \leq x\},$ and the quality of the
approximation is, in turn, dependent on the unknown distribution
function $F$ (see \cite{MR2364939,MR2234156}). Therefore, in practice,
one does not know the block-size $n$ for which the GEV model $\Pg$
well-approximates the distribution of $M_n.$ Even if a good choice of
$n$ is known, one cannot often employ it in practice, because larger
$n$ means smaller $m$, the number of blocks, and consequentially, the inferential errors
could be large. 
Due to the arbitrariness in the
estimation procedures and the nature of applications (calculating wind
speeds for building sky-scrapers, building dykes for preventing
floods, etc.), it is desirable to have, in addition, a data-driven
procedure that yields a conservative upper bound for $x_p$ that is
robust against model errors. To accomplish this, one can form a
collection of competing probability models $\mathcal{P},$ all of which
appear plausible as the distribution of $M_n$, and compute the maximum
of $p^n$-th quantile over all the plausible models in $\mathcal{P}.$
This is indeed the objective of the sections that follow.


\section{A non-parametric framework for addressing model errors} 
\label{SEC-NONPARAM-ROB}
Let $(\Omega,\mathcal{F})$ be a measurable space and
$M_1(\mathcal{F})$ denote the set of probability measures on
$(\Omega, \mathcal{F}).$ Let us assume that a reference probability
model $\Pref \in M_1(\mathcal{F})$ is inferred by suitable modelling
and estimation procedures from historical data. Naturally, this model
is not the same as the distribution from which the data has been
generated, and is expected only to be close to the data generating
distribution. In the context of Section \ref{SEC-EVT-STANDARD}, the
model $\Pref$ corresponds to $\Pg,$ and the data generating model
corresponds to the true distribution of $M_n.$ With slight
perturbations in data, we would, in turn, be working with a slightly
different reference model. Therefore, it has been of recent interest
to consider a family of probability models $\mathcal{P},$ all of which
are plausible, and perform computations over all the models in that
family.  Following the rich literature of robust optimization, where
it is common to describe the set of plausible models using distance
measures (see \cite{Ben_Tal}), we consider the set of plausible models
to be of the form
\[ \mathcal{P} = \left\{ P \in M_1(\mathcal{F}) : d\left( P,
      \Pref\right) \leq \delta \right\}\] for some distance functional
$d:M_1(\mathcal{F}) \times M_1(\mathcal{F}) \rightarrow \mathbb{R}_+
\cup \{+\infty\},$ and a suitable $\delta > 0.$ Since
$d(\Pref,\Pref) = 0$ for any reasonable distance functional, $\Pref$
lies in $\mathcal{P}.$ Therefore, for any random variable $X,$ along
with the conventional computation of $\Eref [X],$ one aims to provide
``robust'' bounds,
\[ \inf_{P \in \mathcal{P}} \Ep [X] \leq \Eref [X] \leq \sup_{P \in
  \mathcal{P}} \Ep [X].\]
Here, we follow the notation that $\Ep [X] = \int X dP$ for any
$P \in M_1(\mathcal{F}).$ Since the state-space $\Omega$ is
uncountable, evaluation of the above $\sup$ and $\inf$-bounds, in
general, are infinite-dimensional problems. However, as it has been
shown in the recent works \cite{MAFI:MAFI12050,Glasserman_Xu}, it is
indeed possible to evaluate these robust bounds for carefully chosen
distance functionals $d.$

\subsection{Divergence measures}
\label{SEC-DIV-MEAS}
Consider two probability measures $P$ and $Q$ on
$(\Omega, \mathcal{F})$ such that $P$ is absolutely continuous with
respect to $Q.$ The Radon-Nikodym derivative $dP/dQ$ is then
well-defined. The Kullback-Liebler divergence (or KL-divergence) of
$P$ from $Q$ is defined as
\begin{equation} 
  D_1(P,Q) := \Eq \left[ \frac{dP}{dQ} \log \left(
      \frac{dP}{dQ}\right)\right].
\label{KL-DIV}
\end{equation}
This quantity, also referred to as relative entropy (or) information
divergence, arises in various contexts in probability
theory
. For our purposes, it will be useful to consider a general class of
divergence measures that includes KL-divergence as a special case. For
any $\alpha > 1,$ the R\'{e}nyi divergence of degree $\alpha$ is
defined as:
\begin{equation}
  D_\alpha(P,Q) := \frac{1}{ \alpha - 1} \log \Eq \left[
    \left(\frac{dP}{dQ}\right)^\alpha \right].
\label{REN-DIV}
\end{equation}
It is easy to verify that for every $\alpha,$ $D_\alpha(P,Q) = 0,$ if
and only if $P = Q.$ Additionally, the map $\alpha \mapsto D_\alpha$
is nondecreasing, and continuous from the left. Letting
$\alpha \rightarrow 1$ in \eqref{REN-DIV} yields the formula for
KL-divergence $D_1(P,Q).$ Thus KL-divergence is a special case of the
family of R\'{e}nyi divergences, when the parameter $\alpha$ equals
$1.$ If the probability measure $P$ is not absolutely continuous with
respect to $Q,$ then $D_\alpha(P,Q)$ is taken as $\infty.$ Though none
of these divergence measures form a metric on the space of probability
measures, they have been used in a variety of scientific disciplines
to discriminate between probability measures. For more details on the
divergences $D_\alpha,$ see \cite{MR0132570,MR926905}.

\subsection{Robust bounds via maximization of convex integral
  functionals}
\label{SEC-ROBUST-GEN-THEORY}
Recall that $\Pref$ is the reference probability measure obtained via
standard estimation procedures. Since the model $\Pref$ could be
misspecified, we consider all models that are not far from $\Pref$ in
the sense quantified by divergence $D_\alpha,$ for any fixed
$\alpha \geq 1.$ Given a random variable $X,$ we consider optimization
problems of form
\begin{align}
  V_\alpha(\delta) := \sup \left\{ \Ep [X]: D_\alpha(P,\Pref) \leq
  \delta \right\}. 
\label{OPT-GEN-PROB}
\end{align} 
Though KL-divergence has been a popular choice in defining sets of
plausible probability measures as above, use of divergences $D_\alpha,
\ \alpha \neq 1$ is not new altogether: see \cite{MR3299141,Glasserman_Xu}. Due to the Radon-Nikodym theorem, $V_\alpha(\delta)$ can
be alternatively written as,
\begin{align}
  V_\alpha(\delta) = \sup \left\{ \Eref [LX]:
  \Eref[\phi_\alpha(L)] \leq \bd, \Eref [L] = 1, L \geq 0\right\},
\label{OPT-PRIMAL}
\end{align}
where $L= dP/dP_{ref}$ and 
\begin{align}
  \phi_\alpha(x) =
  \begin{cases}
    x^\alpha \quad\quad\ \  \text{ if } \alpha > 1,\\
    x\log x \quad\text{ if } \alpha = 1
  \end{cases}
  \text{ and }
  \quad \bd = 
  \begin{cases}
    \exp \left((\alpha-1)\delta \right) \quad \ \text{ if } \alpha > 1,\\
    \delta \quad\quad \quad\quad \quad\quad \quad \text{ if } \alpha =
    1.
  \end{cases}
\label{DEFN-PHI-BD}
\end{align}

A standard approach for solving optimization problems of
the above form is to write the corresponding dual problem as below:
\begin{align*}
  V_\alpha(\delta) &\leq \inf_{\overset{\lambda \geq 0,}{\mu}} \sup_{L
                     \geq 0} \Eref \left[ L X - \lambda \left(
                     \phi_\alpha(L) - \bd \right) + \mu (L-1)\right]. 
\end{align*}
The above dual problem can, in turn, be relaxed by taking the $\sup$
inside the expectation:
\begin{equation}
  V_\alpha(\delta) \leq \inf_{\overset{\lambda \geq 0,}{\mu}} \left\{
                     \lambda \bd - \mu + \lambda \Eref
                     \left[\sup_{L \geq 0} \left\{ \frac{(X +
                     \mu)}{\lambda} L -\phi_\alpha(L)
                     \right\}\right]\right\}. \label{DUAL-PROB-BNDD}                       
\end{equation}
By first order condition the inner supremum is solved by
\begin{align}
  L^*_\alpha(c_1,c_2) :=
  \begin{cases}
    c_1 \exp(c_2 X), &\text{ if } \alpha = 1,\\
    (c_1 + c_2X)_+^{1/(\alpha-1)}, &\text{ if } \alpha > 1,
  \end{cases}
\label{GEN-OPTIMIZER}
\end{align}
for some suitable constants $c_1\in\mathbb{R}, c_2>0$ when $\alpha>1$;
and $c_1\in (0,1)$ and $c_2>0$ when $\alpha=1$. Then the following
result is intuitive:
\begin{proposition}
  Fix any $\alpha \geq 1.$ For $L^*_\alpha(c_1,c_2)$ defined as in
  \eqref{GEN-OPTIMIZER}, if there exists constants $c_1$ and $c_2$
  such that
  \[ L^*_\alpha(c_1,c_2) \geq 0, \ \Eref \left[ L^*_\alpha(c_1,c_2)
  \right] = 1 \text{ and } \Eref\left[ \phi_\alpha
    \left(L^*_\alpha(c_1,c_2) \right) \right] = \bd,\]
  then $L^*_\alpha(c_1,c_2)$ solves the optimization problem
  \eqref{OPT-PRIMAL}. The corresponding optimal value is
\begin{align}
 V_\alpha(\delta) = \Eref \left[ L^*_\alpha(c_1,c_2) X \right].
\label{GEN-OPT-VAL}
\end{align}
\label{THM-GEN-OPT}
\end{proposition}
\begin{proof}
  Under the specified assumptions, when we plug $L^*_\alpha(c_1,c_2)$
  into the right-hand-side of inequality (\ref{DUAL-PROB-BNDD}), it is
  simplified to $\Eref \left[ L^*_\alpha(c_1,c_2) X \right]$, so we
  have
  $V_{\alpha}(\delta)\leq \Eref \left[ L^*_\alpha(c_1,c_2) X
  \right]$. On the other hand, since $L^*_\alpha(c_1,c_2)$ satisfies
  all the constraints in the problem (\ref{OPT-PRIMAL}), we have
  $V_{\alpha}(\delta)\geq \Eref \left[ L^*_\alpha(c_1,c_2) X
  \right]$. \hfill $\Box$
\end{proof}

\begin{remark}
  \textnormal{Let us say one can determine constants $c_1$ and $c_2$
    for given $X, \alpha$ and $\delta.$ Then, as a consequence of
    Proposition \ref{THM-GEN-OPT}, the optimization problem
    \eqref{OPT-GEN-PROB} involving uncountably many measures can, in
    turn, be solved by simply simulating $X$ from the original
    reference measure $\Pref,$ and multiplying by corresponding
    $L^*_\alpha(c_1,c_2)$ to compute the expectation as in
    \eqref{GEN-OPT-VAL}. Interested readers are referred to
    \cite{Glasserman_Xu} for specific examples illustrating this
    procedure. A general theory for optimizing convex integral
    functionals of form \eqref{OPT-PRIMAL}, that includes a bigger
    class of divergence measures, can be found in
    \cite{MAFI:MAFI12050}.}
\end{remark}


\noindent In this paper, we restrict to the case where the random
variable $X$ above is an indicator function. As illustrated in Section
\ref{sec:wc-prob} below, the computation of bounds $V_\alpha(\delta)$
turns out to be simpler for this special case. 

\subsection{Evaluation of worst case probabilities}
\label{sec:wc-prob}
From here onwards, suppose that $\Pref$ is a
probability measure on $(\mathbb{R}, \mathcal{B}(\mathbb{R}))$
satisfying $\Pref(x,\infty) \rightarrow 0$ as
$x \rightarrow \sup\{x:\Pref(x,\infty) > 0\}.$ For a given
$\delta > 0,$ $\alpha \geq 1,$ define the worst-case tail probability
function, $\Fa:\mathbb{R} \rightarrow [0,1],$ as,
\begin{align}
  \Fa(x) := \sup \{ P(x,\infty): D_\alpha(P,\Pref) \leq \delta\}.
  \label{defn:worst-case-tail}
\end{align}
In addition, for a given $\alpha \geq 1,$ define the functions
\begin{equation*}
    \delta_{thr}(u) := u\phi_\alpha(1/u) \quad \text{ for } \quad u \in (0,1) 
\end{equation*}
and
\begin{equation*}
  g_\alpha(u,\theta) :=     u\phi_\alpha(\theta) + (1-u)\phi_\alpha\left( \frac{1-\theta
  u}{1-u}\right) \quad\text{for}\quad \{(u,\theta) \in (0,1) \times (1,\infty): u\theta \leq 1\}.
\end{equation*}   

The following result is a corollary of Proposition \ref{THM-GEN-OPT}.
\begin{corollary}
  \label{COR-TP-SOLN-CHAR}
  Suppose that $\Fa(\cdot)$ is defined as in
  \eqref{defn:worst-case-tail} and $x \in \mathbb{R}$ is such that
  $\Pref(x,\infty) > 0.$ Then, if
  $\bar{\delta} \leq \delta_{thr}(\Pref(x,\infty)),$ there exists
  $\theta_x > 1$ satisfying,
  \begin{align}
    g_\alpha(\Pref(x,\infty),\theta_x) = \bar{\delta}.
    \label{THETA-SOLN-EQN}
  \end{align}  
  Moreover,
  \begin{align}
      \Fa(x) =  
      \begin{cases}
        \theta_x P_{ref}(x,\infty) \quad\quad&\text{ if } \bar{\delta} \leq \delta_{thr}(\Pref(x,\infty)),\\
        1 & \text{ otherwise.}
      \end{cases}
    \end{align}
\end{corollary}
\begin{proof}
  Consider the canonical mapping
  $Z(\omega) = \omega, \ \omega \in \mathbb{R}.$ Then, for a given
  $x,$
    \begin{align*}
      \Fa(x) = \sup \left\{ E_{\Pref}[ L\mathbf{1}(Z > x)] :
      E_{_{\Pref}}\left[ \phi_\alpha(L) \right] \leq 
      \bd, E_{_{\Pref}} [L] = 1, L \geq 0 \right\}.
    \end{align*}
    is an optimization problem of the
    form \eqref{OPT-GEN-PROB}. Therefore, due to Proposition
    \ref{THM-GEN-OPT} and equation \eqref{GEN-OPTIMIZER}, the optimal $L^*$ has the form
    \begin{align*}
      L^*_\alpha(c_1,c_2) :=
      \begin{cases}
        c_1 \exp(c_2 \mathbf{1}(Z>x)), &\text{ if } \alpha = 1,\\
        (c_1 + c_2\mathbf{1}(Z>x))_+^{1/(\alpha-1)}, &\text{ if } \alpha > 1,
      \end{cases}
    \end{align*}
    When we consider the two cases of $Z>x$ and $Z\leq x$, and combine
    the range information on $c_1, c_2$ following equation
    (\ref{GEN-OPTIMIZER}), the above formulation of
    $L^*_\alpha(c_1,c_2)$ can further be simplified to
    $\theta_x \mathbf{1}(x,\infty) + \tilde{\theta}_x
    \mathbf{1}(-\infty,x]$ for some constants $\theta_x > 1$ and
    $\tilde{\theta}_x \in [0,1).$ Substituting
    \[L^*_\alpha = \theta_x \mathbf{1}(x,\infty) + \tilde{\theta}_x
      \mathbf{1}(-\infty,x]\] in the constraints $L^\ast \geq 0,$
    $E_{_{\Pref}}[\phi_\alpha(L^*)] = \bd$ and
    $E_{_{\Pref}}[L^*] = 1,$ we obtain that for any $\theta_x$ and
    $\tilde{\theta}_x$ satisfying, 
    \begin{align*}
      \theta_x &\in \left(1,1/\Pref(x,\infty)\right], \quad 
      \tilde{\theta}_x = \frac{1-\theta_x \Pref(x,\infty)}{1- \Pref(x,\infty)} \in [0,1) \quad \text{ and }\\  \bd &= \Pref(x,\infty) \phi_\alpha(\theta_x) + (1-\Pref(x,\infty))\phi_\alpha(\tilde{\theta}_x) = g_\alpha(\Pref(x,\infty),\theta_x), 
    \end{align*}
    we have, 
    \[\Fa(x) = E_{_{\Pref}}\left[L^\ast_\alpha \mathbf{1}(Z > x)\right] =
      \theta_x P_{ref}(x,\infty).\]

    Next, for any fixed $u \in (0,1),$ observe that
    $g_\alpha(u,\theta)$ is increasing continuously in $\theta$ over
    the interval $(1,1/u],$ taking values in the range
    $(1,\delta_{thr}(u)]$ when $\alpha>1$, and in the range $(0,\delta_{thr}(u)]$ when $\alpha=1$. Therefore, an assignment of $\theta$
    satisfying $g_\alpha(u,\theta) = \bd$ exists only when
    $\bd \leq \delta_{thr}(u).$ 
    In particular, the assignment $\theta$ satisfying
    $g_\alpha(u,\theta) = \bd$ increases as $\bd$ increases until
    when $\bd = \delta_{thr}(u)$ for which the corresponding $\theta$
    satisfying $g_\alpha(u,\theta) = \delta_{thr}(u)$ is given by
    $\theta = 1/u.$

    Thus, given $x \in \mathbb{R}$ such that
    $\Pref(x,\infty) \in (0, 1),$ there exists $\theta_x > 1$
    satisfying \eqref{THETA-SOLN-EQN} only if
    \[\bd \leq \delta_{thr}(\Pref(x,\infty)),\] and specifically for the case, 
    $\bd = \delta_{thr}(\Pref(x,\infty)),$ we have
    $\theta_x = 1/\Pref(x,\infty).$ Therefore,
    \[\Fa(x) =
      \theta_x P_{ref}(x,\infty)
      =
      \begin{cases}
        \theta_x P_{ref}(x,\infty) \quad\quad&\text{ if } \bd < \delta_{thr}(\Pref(x,\infty)),\\
        1 &\text{ if } \bd = \delta_{thr}(\Pref(x,\infty))
      \end{cases}
    \]
    Since $\Fa(x)$ is nondecreasing in $\delta,$ it follows that
    $\Fa(x) = 1,$ also for values of $\delta$ such that the
    corresponding $\bd > \delta_{thr}(\Pref(x,\infty)).$ \hfill $\Box$
  \end{proof}

\section{Asymptotic analysis of robust estimates of tail
  probabilities } 
\label{SEC-ASYMP-CHAR}
In this section we study the asymptotic behaviour of 
$\Fa (x) := \sup \{ P(x,\infty) : D_\alpha(P, \Pref) \leq \delta\},$ 
for any $\alpha \geq 1$ and $\delta > 0,$ as $x \rightarrow \infty.$
We first verify in Proposition \ref{Prop-Fa-tail-cdf} below that
$\Fa(x),$ viewed as a function of $x,$ satisfies the properties of a
tail distribution function. A proof of Proposition
\ref{Prop-Fa-tail-cdf} is presented in Section \ref{SEC-THM-PROOF}.

\begin{proposition}
  The function, $F_{\alpha,\delta}(x):= 1-\Fa(x),$ viewed as a
  function of $x,$ satisfies properties of cumulative distribution
  function of a real-valued random variable.
\label{Prop-Fa-tail-cdf}
\end{proposition}

Thus from here onwards, we shall refer to $\Fa(\cdot)$ as the
\textit{$\alpha$-family worst-case tail distribution}, and study its
qualitative properties such as domain of attraction for the rest of
this section. All the probability measures involved, unless
explicitly specified, are taken to be defined on
$(\mathbb{R}, \mathcal{B}(\mathbb{R})).$ Since
$D_\alpha(\Pref,\Pref)=0,$ it is evident that the worst-case tail
estimate $\Fa(x)$ is at least as large as $\Pref(x,\infty).$ While the
overall objective has been to provide robust estimates that account
for model perturbations, it is certainly not desirable that the
worst-case tail distribution $\Fa(\cdot),$ for example, has
unrealistically slow logarithmic decaying tails. Seeing this, our
interest in this section is to quantify how heavier the tails of
$\Fa(\cdot)$ are, when compared to that of the reference model.

The bigger the plausible family of measures
$\left\{ P: D_\alpha(P,\Pref) \leq \delta \right\},$ the slower the
decay of tail $\Fa(x)$ is, and vice versa. Hence it is conceivable
that the parameter $\delta$ is influential in determining the rate of
decay of $\Fa(\cdot).$ However, as we shall see below in Theorem
\ref{EVT-RENYI}, it is the parameter $\alpha$ (along with the tail
properties of the reference model $\Pref$) that solely determines the
domain of attraction, and hence the rate of decay, of $\Fa(\cdot).$


Since our primary interest in the paper is with respect to reference
model $\Pref$ being a GEV model, we first state the result in this
context: 
\begin{theorem}
  Let the reference GEV model $\Pg$ have shape parameter $\gref.$ Then the distribution $F$ induced by $\Pg$ satisfies the regularity assumptions of Proposition \ref{PROP-EXTREMES-CHAR} with $\gamma = \gref.$ For
  any $\alpha > 1,$ let
  $\Fa (x) := \sup \{ P(x,\infty) : D_\alpha(P, \Pg) \leq \delta\},$
  and
  \[\gamma^\ast := \frac{\alpha}{\alpha-1}\gref.\]
  Then the distribution function $F_{\alpha,\delta}(x) = 1 - \Fa(x)$
  belongs to the domain of attraction of $G_{\gamma^\ast}.$ 
  \label{EVT-GEV-RENYI}
\end{theorem}

\noindent Theorem \ref{EVT-GEV-RENYI} is, however, a corollary of
Theorem \ref{EVT-RENYI} below.

\begin{theorem}
  Let the reference model $\Pref$ belong to the domain of attraction
  of $G_{\gamma_{\textnormal{ref}}}.$ In addition, let $\Pref$ induce
  a distribution $F$ that satisfies the regularity assumptions of
  Proposition \ref{PROP-EXTREMES-CHAR} with $\gamma = \gref.$ For
  any $\alpha > 1,$ let
  $\Fa (x) := \sup \{ P(x,\infty) : D_\alpha(P, \Pref) \leq \delta\},$
  and
  \[\gamma^\ast := \frac{\alpha}{\alpha-1}\gref.\]
  Then the distribution function $F_{\alpha,\delta}(x) = 1 - \Fa(x)$
  belongs to the maximum domain of attraction of $G_{\gamma^\ast}.$
  \label{EVT-RENYI}
\end{theorem}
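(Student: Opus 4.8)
The plan is to reduce everything to the explicit solution of the worst‑case tail problem obtained in Example~\ref{EG-TP-SOLN-CHAR}, and then to verify the von Mises–type criterion of Proposition~\ref{PROP-EXTREMES-CHAR} for $F_\alpha = 1 - \Fa$. First, it suffices to treat $\alpha > 1$, since for $\alpha = 1$ the number $\gamma^\ast = \tfrac{\alpha}{\alpha-1}\gref$ is finite only in degenerate cases. Write $F$ for the distribution induced by $\Pref$, $\bar F = 1 - F$, and $w := \Fa$. Since $\bar F(x)\to 0$ as $x\uparrow x_{_F}^\ast$, the hypothesis of Example~\ref{EG-TP-SOLN-CHAR} holds for all $x$ in a left neighbourhood of $x_{_F}^\ast$: the left side of \eqref{THETA-SOLN-EQN}, as a function of $\theta$, is strictly increasing on $(1,1/\bar F(x))$ and ranges from $1$ up to $\bar F(x)^{1-\alpha}$, which exceeds $\bd$ once $\bar F(x)$ is small. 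Substituting $\theta_x = w(x)/\bar F(x)$ into \eqref{THETA-SOLN-EQN} then shows that, there, $w$ is characterised by the implicit equation
\begin{equation}
  \frac{w(x)^\alpha}{\bar F(x)^{\alpha-1}} + \frac{(1-w(x))^\alpha}{(1-\bar F(x))^{\alpha-1}} = \bd .
  \label{PLAN-IMPL}
\end{equation}
Because $F$ is twice differentiable with $F'>0$ near $x_{_F}^\ast$ and the left side of \eqref{PLAN-IMPL} is smooth in $(w,\bar F)$ with nonvanishing $w$-derivative, the implicit function theorem gives that $w$ is twice continuously differentiable there, with $w'<0$; letting $x\uparrow x_{_F}^\ast$ in \eqref{PLAN-IMPL} forces $w(x)\to 0$, and comparison of leading orders yields $w(x) = (\bd-1)^{1/\alpha}\,\bar F(x)^{(\alpha-1)/\alpha}(1+o(1))$, so in particular $\theta_x = w(x)/\bar F(x)\to\infty$.

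Next, differentiating \eqref{PLAN-IMPL} once in $x$ and simplifying yields the exact identity
\begin{equation}
  \frac{w(x)}{-w'(x)} = \frac{\alpha}{\alpha-1}\cdot\frac{1-F(x)}{F'(x)}\cdot R(x),\qquad R(x) := \frac{1-s(x)^{\alpha-1}}{1-s(x)^{\alpha}},\quad s(x) := \frac{\bar F(x)(1-w(x))}{w(x)(1-\bar F(x))}.
  \label{PLAN-RAT}
\end{equation}
Since $w/\bar F = \theta_x\to\infty$ and $(1-w)/(1-\bar F)\to 1$, we get $s(x)\to 0$ and hence $R(x)\to 1$. Differentiating \eqref{PLAN-RAT} once more,
\[
  \left(\frac{w}{-w'}\right)'\hspace{-4pt}(x) = \frac{\alpha}{\alpha-1}\left[\left(\frac{1-F}{F'}\right)'\hspace{-4pt}(x)\,R(x) + \frac{1-F(x)}{F'(x)}\,R'(x)\right],
\]
and by the regularity assumption $\big(\tfrac{1-F}{F'}\big)'(x)\to\gref$, so the first bracketed term tends to $\tfrac{\alpha}{\alpha-1}\gref = \gamma^\ast$. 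Provided the second term tends to $0$, Proposition~\ref{PROP-EXTREMES-CHAR} applied to $F_\alpha = 1-\Fa$ (whose right endpoint equals $x_{_F}^\ast$, since $\Fa(x)=\theta_x\bar F(x)$ vanishes exactly where $\bar F$ does) gives $F_\alpha\in\mathcal{D}(G_{\gamma^\ast})$, as claimed.

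The remaining estimate on the second term is where the real work lies, and I expect it to be the main obstacle. Refining the expansion of $w$ from \eqref{PLAN-IMPL} gives $s(x) = (\bd-1)^{-1/\alpha}\,\bar F(x)^{1/\alpha}(1+o(1))$, hence $R(x)-1 = O\big(\bar F(x)^{(\alpha-1)/\alpha}\big)$; differentiating (using $w'$ from \eqref{PLAN-RAT}, which is itself expressible through $F',\bar F,w$, so no higher derivatives of $F$ enter) gives $R'(x) = O\big(\bar F(x)^{-1/\alpha}F'(x)\big)$, whence $\tfrac{1-F}{F'}\,R' = O\big(\bar F^{(\alpha-1)/\alpha}\big)\to 0$. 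The delicate points are to carry out these expansions and, especially, the bound on $R'$ with remainders controlled uniformly as $x\uparrow x_{_F}^\ast$; to observe that the possibly unbounded factor $(1-F)/F'$ is harmlessly absorbed against a vanishing power of $\bar F$ in all three domains; and to check that finiteness of $\gamma^\ast$ is precisely what keeps the ratio $\bar F/F'$, and thus $R$ and $R'$, under control. One also records the routine endpoint bookkeeping when $\gref<0$, where $x_{_F}^\ast<\infty$ and ``$x\to\infty$'' is read as ``$x\uparrow x_{_F}^\ast$''.

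Finally, Theorem~\ref{EVT-GEV-RENYI} follows as the special case $\Pref=\Pg$, since a GEV distribution $G_{\gref}(a_0\cdot + b_0)$ meets the regularity hypotheses of Proposition~\ref{PROP-EXTREMES-CHAR} with $\gamma=\gref$.
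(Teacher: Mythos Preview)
Your approach is correct and differs substantively from the paper's. Rather than working with the exact $\Fa$, the paper sandwiches it between an explicit upper bound $\Au(x)=\phi_\alpha^{-1}\big(\bd/\bar F(x)\big)\,\bar F(x)$ (immediate from $\bar F\,\phi_\alpha(\theta_x)\le\bd$) and a lower bound $\Al(x)$ obtained by exhibiting a concrete measure $Q$ with $dQ/d\Pref(x)=\phi_\alpha^{-1}\big(c/(\bar F(x)(1-\log\bar F(x))^2)\big)$ and forming a convex combination with $\Pref$; it then verifies the von~Mises criterion of Proposition~\ref{PROP-EXTREMES-CHAR} separately for $\Au$ and $\Al$ (Lemma~\ref{LEM-UL-CHAR}), both yielding parameter $\gamma^\ast$, and appeals to a tail-equivalence result to pass the conclusion to the sandwiched $\Fa$.

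Your route, by contrast, stays with the exact $\Fa$ throughout: you differentiate the implicit relation \eqref{THETA-SOLN-EQN} to obtain the closed-form identity \eqref{PLAN-RAT}, and reduce everything to the single estimate $\tfrac{1-F}{F'}\,R'\to 0$. This is more direct and gives a slightly sharper conclusion (that $F_\alpha$ itself satisfies the von~Mises condition, not merely that it lies in the correct domain of attraction), and it avoids constructing the auxiliary measure $Q$ entirely. The price is exactly the bookkeeping you flag---carrying the expansion of $s$ and $R'$ with uniform control of remainders---whereas the paper's sandwich sidesteps that by computing with the simpler explicit functions $g$ and $h$. Your identity \eqref{PLAN-RAT} and the asymptotics $s\sim(\bd-1)^{-1/\alpha}\bar F^{1/\alpha}$, $\tfrac{\bar F}{F'}R'=O(\bar F^{(\alpha-1)/\alpha})$ check out, so the plan goes through for all signs of $\gref$ once $\alpha>1$.
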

The special case corresponding to $\alpha = 1$ is handled in
Propositions \ref{propspecial} and
\ref{PROP-KL}.  
Proofs of Theorems \ref{EVT-GEV-RENYI} and \ref{EVT-RENYI} are
presented in Section \ref{SEC-THM-PROOF}.


\begin{remark}
  \textnormal{ First, observe that $P(x,\infty) \leq \Fa(x),$ for
    every $P$ in the neighborhood set of measures
    $\mathcal{P}_{\alpha,\delta} := \{P : D_\alpha(P,\Pref) \leq
    \delta\}.$ Therefore, for any $\alpha > 1,$ apart from
    characterizing the domain of attraction of $\Fa,$ Theorem
    \ref{EVT-RENYI} offers the following insights on the neighborhood
    $\mathcal{P}_{\alpha,\delta}:$
  \begin{itemize}
  \item[1)] If the reference model belongs to the domain of attraction
    of a Frechet distribution (that is, $\gref >0$), and if $P$ is a
    probability measure that lies in its neighborhood
    $\mathcal{P}_{\alpha,\delta},$ then $P$ must satisfy that
    \begin{equation}\label{tailFrechet}
    P(x,\infty) = O \left(x^{- \frac{\alpha-1}{\alpha \gref} +
          \epsilon}\right),
    \end{equation} 
    as $x \rightarrow \infty,$ for every
    $\epsilon > 0.$ This conclusion is a consequence of
    \eqref{FRECH-CHAR}: $\Fa$ is in the domain of attraction of $G_{\gamma^*}$, then by \eqref{FRECH-CHAR} we have
  \[
    \Fa(x) = L(x) x^{-1/\gamma^*} = L(x) x^{-\frac{\alpha-1}{\alpha\gref}}, 
   \]
     and the observation that
    $P(x,\infty) \leq \Fa(x).$ In addition, as in the proof of Theorem
    \ref{EVT-RENYI}, one can exhibit a measure
    $P \in \mathcal{P}_{\alpha,\delta}$ such that
    $P(x,\infty) \geq c x^{- (\alpha-1) / \alpha\gref}$ for some
    $c > 0$ and all large enough $x$.
  \item[2)] On the other hand, if the reference model belongs to the
    Gumbel domain of attraction ($\gref = 0$), then every
    $P \in \mathcal{P}_{\alpha,\delta}$ satisfies
    $P(x,\infty) = o (x^{-\epsilon}),$ as $x \rightarrow \infty,$ for
    every $\epsilon > 0.$
\item[3)] Now consider the case where
  $\Pref \in \mathcal{D}(G_{\gamma_{\textnormal{ref}}})$ for some
  $\gref < 0$ (that is, the reference model belongs to the domain of
  attraction of a Weibull distribution). Let $x_{_F}^* < \infty$ denote
  the supremum of its bounded support. In that case, any probability
  measure $P$ that belongs to the neighborhood
  $\mathcal{P}_{\alpha,\delta}$ must satisfy that $P(-\infty,x_{_F}^*) = 1$
  and
  \[P(x_{_F}^* - \epsilon, x_{_F}^*) =
    O\left(\epsilon^{-\frac{\alpha-1}{\alpha \gref} - \epsilon'}
    \right),\] as $\epsilon \rightarrow 0,$ for every $\epsilon' > 0.$
  In addition, one can exhibit a measure
  $P \in \mathcal{P}_{\alpha,\delta}$ such that
  $P(x_{_F}^* - \epsilon,x_{_F}^*) \geq c\epsilon^{- (\alpha-1)
    /\alpha \gref},$ for some positive constant $c$ and all
  $\epsilon > 0$ sufficiently small.
\end{itemize}
It is important to remember that the above properties hold for all
$\alpha > 1,$ and is not dependent on $\delta.$}
\label{REM-NEIGH-PROPS}
\end{remark}
For a fixed reference model $\Pref,$ it is evident from Remark
\ref{REM-NEIGH-PROPS} that the neighborhoods
$\mathcal{P}_{\alpha, \delta} = \{P : D_\alpha(P,\Pref) \leq \delta\}$
include probability distributions with heavier and heavier tails as
$\alpha$ approaches 1 from above. This is in line with the observation
that $D_\alpha(P,\Pref)$ is a non-decreasing function in $\alpha,$ and
hence larger neighborhoods $\mathcal{P}_{\alpha, \delta}$ for smaller
values of $\alpha.$ In particular, when $\alpha = 1$ and shape
parameter $\gref = 0,$ the quantity
$\gamma^\ast = \gref \alpha/(\alpha-1) $ defined in Theorem
\ref{EVT-GEV-RENYI} is not well-defined. This corresponds to the set
of plausible measures $\{P:D_1(P,G_0) \leq \delta\}$ defined using
KL-divergence around the reference Gumbel model $G_0$. The following
result describes the tail behaviour of $\Fa$ in this case:
\begin{proposition}\label{propspecial}
  Recall the definition of extreme value distributions $G_\gamma$ in
  \eqref{EV-DISTS}. Let
  $\Fb(x) = \sup\{ P(x,\infty) : D_1(P,G_0) \leq \delta\},$ and
  $F_{1,\delta}(x) = 1 -\bar{F}_{1,\delta}(x).$ Then $F_{1,\delta}$
  belongs to the domain of attraction of $G_1.$
\label{PROP-EVT-KL}
\end{proposition}
\noindent The following result, when contrasted with Remark
\ref{REM-NEIGH-PROPS}, better illustrates the difference between the
cases $\alpha > 1$ and $\alpha = 1.$ 
\begin{proposition}
  Recall the definition of $G_{\gamma}$ as in \eqref{EV-DISTS}. For
  every $\delta > 0,$ one can find a probability measure $P$ in the
  neighborhood
  $\{P : D_1(P,G_{\gamma_{\textnormal{ref}}}) \leq \delta\},$ along
  with positive constants $c_+$ or $c_-$ or $c_0,$ and $x_+$ or
  $x_0$ or $\epsilon_-$ such that
  \begin{itemize}
  \item[a)] $P(x,\infty)  \geq c_+ \log^{-3}x$ for every $x > x_+,$ if
    $\gref > 0;$ 
  \item[b)] $P(x,\infty) \geq c_0 x^{-1}\log^{-2}x$ for every $x > x_0,$ if
    $\gref = 0;$ and 
  \item[c)] $P(-\infty,x_{_G}^*) = 1$ and
    $P(x_{_G}^*-\epsilon,x_{_G}^*) \geq c_3\log^{-3}
    \frac{1}{\epsilon}$ for every $\epsilon < \epsilon_-,$ if
    $\gref < 0.$ Here, the right endpoint
    $x_{_G}^* = \sup\{x: G_{\gref}(x) < 1\}$ is finite because
    $\gref < 0.$
    \label{PROP-KL}
  \end{itemize}
\end{proposition}
\noindent In addition, it is useful to contrast these tail decay
results for neighboring measures with that of the corresponding
reference measure $G_{\gamma_{\textnormal{ref}}}$ characterized in
\eqref{FRECH-CHAR}, \eqref{WEIB-CHAR} or \eqref{GUMB-CHAR}. 
It is evident from this comparison that the worst-case tail
probabilities $\Fa(x)$ decay at a significantly slower rate than the
reference measure when $\alpha = 1$ (the KL-divergence
case). 
Table \ref{TAB-DA-DIFF-CASES} below summarizes the rates of decay of
worst-case tail probabilities $\Fa(\cdot)$ over different choices of
$\alpha$ when the reference model is a GEV distribution. 
Proofs of Theorems \ref{EVT-GEV-RENYI} and
\ref{EVT-RENYI}, Propositions \ref{PROP-EVT-KL} and \ref{PROP-KL} are
presented in Section \ref{SEC-THM-PROOF}.

\begin{table}[h!]
    \caption{A summary of domains of attraction of $F_\alpha(x) =
      1-\Fa(x)$ for GEV models. Throughout the paper, $\gamma^\ast :=
      \frac{\alpha}{\alpha-1}\gref$ \newline} 
    \begin{center}
    \begin{tabular}{ c | c | c }
      &  Domain of attraction of & Domain of
                                                   attraction of\\
      
      Reference model &Worst-case tail $\Fa(\cdot), \ \alpha > 1$&
                                                                   Worst-case
                                                                   tail
                                                                   $\Fa(\cdot), \ \alpha = 1$\\
      &  &  (the KL-divergence case)\\
      &  & \\\hline
      & & \\
      $G_0$ &  $G_0$ & $G_1$\\ 
      (Gumbel light tails) & (Gumbel light tails) & (Frechet heavy tails)\\
      & & \\ \hline
      & & \\
      $G_{\gamma_{\textnormal{ref}}},\gref > 0$& $G_{\gamma^\ast}
                                                     $ & --\\
      (Frechet heavy tails) & (Frechet heavy tails) & (slow logarithmic decay of\\
      & &  $\Fa(x)$ as $x \rightarrow \infty$) \\
      & & \\\hline
      & & \\
      $G_{\gamma_\textnormal{ref}},\  \gref < 0$  & $G_{\gamma^\ast}$ &  -- \\ 
      (Weibull)  & (Weibull) & (slow logarithmic decay of $\Fa(x)$ to 0 \\
      & & at a finite right endpoint $x^*$) 
              \label{TAB-DA-DIFF-CASES}
    \end{tabular}
\end{center}
\end{table}


\section{Robust estimation of VaR}
\label{SEC-ROB-VAR-EST}
Given independent samples $X_1,\ldots,X_N$ from an unknown
distribution $F,$ we consider the problem of estimating
$F^{\leftarrow}(p)$ for values of $p$ close to 1. In this section, we
develop a data-driven algorithm for estimating robust upper bounds for
these extreme quantiles by employing traditional extreme value theory
in tandem with the insights derived in Sections \ref{SEC-NONPARAM-ROB}
and \ref{SEC-ASYMP-CHAR}. Our motivation has been to provide
conservative estimates for $F^{\leftarrow}(p)$ that are robust against
incorrect model assumptions as well as calibration errors. 

Naturally, the first step in the estimation procedure is to arrive at
a reference model $\Pg(-\infty,x) = G_{\gamma_0}(a_0x+b_0)$ for the
distribution of block-maxima
$M_n$
. Once we have a candidate model $\Pg$ for $M_n$, the $p^n$-th
quantile of the distribution $\Pg$ serves as an estimator for
$F^{\leftarrow}(p).$ Instead, if we have a family of candidate models
(as in Sections \ref{SEC-NONPARAM-ROB} and \ref{SEC-ASYMP-CHAR}) for
$M_n,$ a corresponding robust alternative to this estimator is to
compute the worst-case quantile estimate over all the candidate models
as below:
\begin{align}
  \hat{x}_p := \sup \big\{ G^{\leftarrow}(p^n) : D_\alpha(G,\Pg) \leq
  \delta \big\}. 
\label{ROB-EST}
\end{align}
Here $G^{\leftarrow}$ denotes the usual inverse function
$G^{\leftarrow}(u) = \inf\{ x : G(x) \geq u \}$ with respect to
distribution $G.$ Since the framework of Section
\ref{SEC-NONPARAM-ROB} is limited to optimization over objective
functionals in the form of expectations (as in \eqref{OPT-GEN-PROB}),
it is immediately not clear whether the supremum in \eqref{ROB-EST}
can be evaluated using tools developed in Section
\ref{SEC-NONPARAM-ROB}. Therefore, let us proceed with the following
alternative: First, compute the worst-case tail distribution
\begin{align*}
  \Fa(x) := \sup \left\{ G(x,\infty): D_\alpha(G,\Pg) \leq \delta
  \right\}, \quad x \in \mathbb{R}
\end{align*}
over all candidate models, and compute the corresponding inverse
\[F_{\alpha,\delta}^{\leftarrow}(p^n) := \inf\{ x : 1 - \Fa(x) \geq p^n \}.\]
The estimate $\hat{x}_p$ (defined as in \eqref{ROB-EST}) is indeed
equal to $F_{\alpha,\delta}^{\leftarrow}(p^n),$ and this is the content of
Lemma \ref{LEM-INV-QUANT-EQ}.
\begin{lemma}
  \label{LEM-INV-QUANT-EQ}
  For every $u \in (0,1),$
  $F_{\alpha,\delta}^{\leftarrow}(u) = \sup \left\{ G^{\leftarrow}(u) :
    D_\alpha(G,\Pg) \leq \delta \right\}.$
\end{lemma}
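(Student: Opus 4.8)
The plan is to recognize $F_\alpha$ as the pointwise infimum, over the uncertainty neighborhood, of the distribution functions $G$, and then to appeal to an elementary interchange between pointwise infima and generalized inverses. Write $\mathcal{P} := \{G : D_\alpha(G,\Pg) \le \delta\}$; this set is nonempty since $D_\alpha(\Pg,\Pg) = 0$. From $G(x,\infty) = 1 - G(-\infty,x]$ and the definition of $\Fa$, one gets immediately that $F_\alpha(x) = 1 - \Fa(x) = \inf\{ G(-\infty,x] : G \in \mathcal{P}\}$, i.e., $F_\alpha$ is the nondecreasing pointwise infimum of the CDFs in $\mathcal{P}$. I would flag at the outset that $F_\alpha$ need not be right-continuous, and hence is not a priori a bona fide CDF; this is harmless, because $F_\alpha^{\leftarrow}(u) = \inf\{x : F_\alpha(x) \ge u\}$ is well defined for any nondecreasing function (with the convention $\inf \emptyset = +\infty$), and the argument below never uses right-continuity.

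For the inequality $F_\alpha^{\leftarrow}(u) \ge \sup\{ G^{\leftarrow}(u) : G \in \mathcal{P}\}$: fix $G \in \mathcal{P}$. Since $F_\alpha \le G$ pointwise, $\{x : F_\alpha(x) \ge u\} \subseteq \{x : G(x) \ge u\}$, so taking infima gives $F_\alpha^{\leftarrow}(u) \ge G^{\leftarrow}(u)$; now take the supremum over $G \in \mathcal{P}$. This step alone already disposes of the case in which the right-hand side equals $+\infty$.

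For the reverse inequality, put $s := \sup\{ G^{\leftarrow}(u) : G \in \mathcal{P}\}$ and assume $s < \infty$. The one elementary fact I need is that for a nondecreasing function $G$ and any $x > G^{\leftarrow}(u)$ one has $G(x) \ge u$: indeed $x$ exceeds $\inf\{y : G(y) \ge u\}$, so there is some $y < x$ with $G(y) \ge u$, and monotonicity gives $G(x) \ge G(y) \ge u$ (no right-continuity is invoked). Consequently, for every $x > s$ and every $G \in \mathcal{P}$ we have $x > G^{\leftarrow}(u)$, hence $G(x) \ge u$, hence $F_\alpha(x) = \inf_{G \in \mathcal{P}} G(x) \ge u$. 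Thus $(s,\infty) \subseteq \{x : F_\alpha(x) \ge u\}$, which forces $F_\alpha^{\leftarrow}(u) \le s$. Combining the two inequalities gives $F_\alpha^{\leftarrow}(u) = s$, which is the assertion of the lemma.

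I do not expect a genuine obstacle in this argument; the only items demanding care are bookkeeping details --- treating $s = +\infty$ separately via the first inequality, the $\inf \emptyset = +\infty$ convention for reference models with bounded right endpoint, and keeping the proof valid even though $F_\alpha$ may fail to be right-continuous.
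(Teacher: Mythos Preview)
Your proof is correct and follows essentially the same route as the paper's: both rest on identifying $F_\alpha$ as the pointwise infimum of the CDFs in $\mathcal{P}$ and then interchanging the generalized inverse with this infimum. The paper compresses the argument into a single chain of set equalities, writing $\{x:\sup_{G}G(x,\infty)\le 1-u\}=\bigcap_{G}[G^{\leftarrow}(u),\infty)$ and reading off the supremum directly, whereas you split it into two inequalities and are more explicit about why right-continuity of $F_\alpha$ is not needed and how the edge case $s=+\infty$ is handled; your version is a bit longer but more carefully justified.
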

\begin{proof}
  For brevity, let
  $\mathcal{P} = \{G : D_\alpha(G,\Pg) \leq \delta\}.$ Then, it
  follows from the definition of $\Fa(\cdot)$ and
  $F_{\alpha,\delta}^{\leftarrow}(\cdot)$ that
   \begin{align*}
     F_{\alpha,\delta}^{\leftarrow}(u) 
     &= \inf \left\{ x: \sup_{G \in \mathcal{P}} G(x,\infty) \leq 1-u
       \right\}\\  
     &= \inf \bigcap_{G \in \mathcal{P}}\bigg \{ x : G(x,\infty) \leq
       1-u \bigg \}\\
     &= \inf \bigcap_{G \in \mathcal{P}} \big [ G^{\leftarrow}(u),
       \infty \big) = \sup_{G \in \mathcal{P}} G^{\leftarrow}(u). 
   \end{align*}
   This completes the proof of Lemma
   \ref{LEM-INV-QUANT-EQ}. \hfill$\Box$
\end{proof}
Now that we know $\hat{x}_p = F_{\alpha,\delta}^{\leftarrow}(p^n)$ is
the desired upper bound, let us recall from Corollary
\ref{COR-TP-SOLN-CHAR} how to evaluate $\Fa(x)$ for any $x$ of
interest. If $\theta_x > 1$ solves
\begin{align*}
  \Pg(x,\infty) \phi_\alpha(\theta_x) + \Pg(-\infty,x)
  \phi_\alpha\left( \frac{1-\theta_x \Pg(x,\infty)}{\Pg(-\infty,x)}
  \right) = \bd, 
\end{align*} 
then $\Fa(x) = \theta_x \Pg(x,\infty).$ Though $\theta_x$ cannot be
obtained in closed-form, given any $x > 0,$ one can numerically solve
for $\theta_x,$ and compute $\Fa(x)$ to a desired level of
precision. On the other hand, given a level $u \in (0,1),$ it is
similarly possible to compute $F_{\alpha,\delta}^{\leftarrow}(u)$ by
solving for $x$ that satisfies $\Pg(x,\infty) < 1-u$ and
\begin{align}
  \Pg(x,\infty) \phi_\alpha\left( \frac{1-u}{\Pg(x,\infty)}\right) +
  \Pg(-\infty,x) \phi_\alpha \left( \frac{u}{\Pg(-\infty,x)} \right) =
  \bd. 
  \label{SOLVING-WORST-QUANTILE}
\end{align}
Therefore, given $\alpha$ and $\delta,$ it is computationally not any
more demanding to evaluate the robust estimates
$F_{\alpha,\delta}^{\leftarrow}(p^n)$ for $F^{\leftarrow}(p)$. 

\subsection{\textbf{On specifying the parameter $\delta.$}}
\label{Sec-Choosing-Delta} For a given choice of paramter
$\alpha \geq 1,$ there are several divergence estimation methods
available in the literature to obtain an estimate
$\hat{\delta} = D_{\alpha}(\hat{P}_{M_n},P_{GEV}),$ where
$\hat{P}_{M_n}$ is the empirical distribution of $M_n$. For our
examples, we use the $k$-nearest neighbor ($k$-NN) algorithm of
\cite{poczos2011estimation} and \cite{wang2009}. See also
\cite{nguyen2009,Wainwright,Gupta_Srivastava} for similar divergence
estimators. These divergence estimation procedures provide an
empirical estimate of the divergence between sample maxima and the
calibrated GEV model $\Pg.$

The specific details of the $k$-NN divergence estimation procedure we
employ from \cite{poczos2011estimation} and \cite{wang2009} are provided in Remark
\ref{Rem-Div-Est} below:

\begin{remark}\label{remark_knn}
  Suppose $M_{n,1},\dots,M_{n,m}$ are independent samples of ${M_n},$
  and $L_1,\dots,$ $L_l$ are samples from $P_{GEV}$. Define
  $\rho_k(i)$ to be the Euclidean distance between
  $M_{n,i}$ and its
  $k$-th nearest neighbour among all
  $M_{n,1},\dots,M_{n,m}$ and similarly
  $\nu_k(i)$ the distance between $M_{n,i}$ and its
  $k$-th nearest neighbour among all $L_1,\dots,L_l$. The
  $k$-NN based density estimators are
\[
 \hat{p}_k(M_{n,i}) = \frac{k/(m-1)}{|B(\rho_k(i))|} \;\; \text{and}\;\; \hat{q}_k(M_{n,i}) = \frac{k/l}{|B(\nu_k(i))|},
\]
where $|B(\rho_k(i))|$ denotes the volume of a ball with radius
$\rho_k(i)$. Then, for a fixed $\alpha,$ the estimator for
$\delta = D_{\alpha}(P_{M_n},P_{GEV})$ is given by
\[
  \hat{\delta} = \frac{1}{\alpha-1}\log\bigg(\frac{1}{m}\sum_{i=1}^m
  \Big(\frac{(m-1)\rho_k(i)}{l\nu_k(i)}\Big)^{1-\alpha}\cdot\frac{\Gamma(k)^2}{\Gamma(k-\alpha+1)\Gamma(k+\alpha-1)}\bigg), 
\]
\label{Rem-Div-Est}
for $\alpha > 1$, where $\Gamma$ denotes the gamma function, and
\[
  \hat{\delta} = \frac{1}{m}\sum_{i=1}^m
  \log\Big(\frac{l\nu_k(i)}{(m-1)\rho_k(i)}\Big),
\]
for $\alpha = 1$.
\end{remark}

For a fixed choice of $\alpha \geq 1$ and desired $p$ close to 1, the
\textsc{Rob-Estimator}$(p,\alpha)$ procedure in Algorithm
\ref{ALGO-ROB-VAR} below provides a summary of the prescribed
estimation procedure.

\begin{algorithm}[b!]
  \caption{ To compute a robust upper bound $\hat{x}_p$ for
    $\textnormal{VaR}_p(X)$ \newline Given: $N$ independent samples
    $X_1,\ldots,X_N$ of $X,$ a level $p$ close to 1, and a fixed
    choice $\alpha \geq 1.$}
    \begin{algorithmic}
      \Procedure{Rob-Estimator}{$p,\alpha$}
      \State Initialize $n < N,$ and let $m =
      \lfloor \frac{N}{n} \rfloor.$\\

      \State Step 1 (Compute block-maxima): Partition $X_1,\ldots,X_N$
      into blocks of size $n,$ and compute the block maxima for each
      block to obtain samples
      $M_{n,1},\ldots,M_{n,m}$ of maxima $M_n.$\\

      \State Step 2 (Calibrate a reference GEV model): Treat the
      samples $M_{n,1},\ldots,M_{n,m}$ as independent samples coming
      from a member of the GEV family and use a parameter estimation
      technique (for example, maximum-likelihood) to estimate the
      parameters $a_{_0},b_{_0}$ and
      $\gamma_{_0},$ along with suitable confidence intervals.\\

      \State Step 3 (Determine the family of candidate models): For
      chosen $\alpha \geq 1,$ determine $\delta$ using a divergence
      estimation procedure (for an example, see Section
      \ref{Sec-Choosing-Delta}). Then the set
      $\{P : D_\alpha(P,\Pg) \leq \delta \}$
      represents the family of candidate models.\\

      \State Step 4 (Compute the $p^n$-th quantile for the reference
      GEV model, and  as well as the worst-case estimate over  all
      candidate models):\\ 
      Solve for $x$ such that
      $G_{\gamma_{_0}}(a_{_0}x + b_{_0}) = p^n,$ and
      let $x_p$ be the corresponding solution.\\
      Solve for $x > x_p$ in \eqref{SOLVING-WORST-QUANTILE} and let
      the solution be $\hat{x}_p.$\\

      \State \textbf{Return} $x_p$ and $\hat{x}_p$
      \EndProcedure
  \end{algorithmic}
\label{ALGO-ROB-VAR} 
\end{algorithm}

\subsection{\textbf{On specifying the parameter $\alpha.$}}
\label{Sec-Choosing-alpha} To input to the estimation procedure
\textsc{Rob-Estimator}$(p,\alpha)$ in Algorithm \ref{ALGO-ROB-VAR},
one can perhaps choose $\alpha$ via one of the three approaches
explained below:
\begin{itemize}
\item[1)] Choose $\alpha$ so that the corresponding
  $\gamma^\ast = \gamma_0 \alpha / (\alpha-1)$ matches with an
  appropriate confidence interval for the estimate $\gamma_0:$ For
  example, if $\gamma_0 >0$ and the confidence interval for
  $\gamma_0,$ estimated from data, is given by
  $(\gamma_0-\epsilon, \gamma_0 + \epsilon),$ then we choose $\alpha$
  satisfying
\begin{align}
  \gamma_0 \frac{\alpha}{\alpha-1} = \gamma_0 + \epsilon. 
  \label{CHOOSING-ALPHA}
\end{align}
See Examples \ref{EG-RAINFALL-DATA} and \ref{EG-SYNTHETIC-DATA} for
demonstrations of choosing $\alpha$ following this approach.

\item[2)] Alternatively, one can choose $\alpha$ based on domain
  knowledge as well: For example, consider the case where one uses
  Gaussian distribution to model returns of a portfolio. In this
  instance, if a financial expert identifies the returns are instead
  heavy-tailed, then one can take $\alpha = 1$ to account for the
  imperfect assumption of Gaussian tails. See Example \ref{EG-NO-DA}
  for a demonstration of choosing $\alpha$ based on this approach.

\item[3)] One can also adopt the following approach that mimicks the
  cross-validation procedure used in machine learning for choosing
  hyperparameters:\\
  Recall that our objective is to estimate $F^{\leftarrow}(p)$ for
  some $p$ close to 1. With this approach, we first estimate
  $F^{\leftarrow}(q)$ as a plug-in estimator from the empirical
  distribution, for some $q < p;$ while it is desirable that $q$ is
  closer to $p,$ care should be taken in the choice that
  $F^{\leftarrow}(q)$ should be estimable from the given $N$ samples
  with high confidence.\\
  Having estimated $F^{\leftarrow}(q)$ directly from the empirical
  distribution, the idea now is to divide the given $N$ samples,
  uniformly at random, into $K$ mini-batches, each of which is
  independently input as samples to the procedure
  \textsc{Rob-Estimator}($q,\alpha$) in Algorithm \ref{ALGO-ROB-VAR}
  to yield $K$ different robust estimates of $F^{\leftarrow}(q)$ for
  an initially chosen value of $\alpha$ (say, $\alpha = 1$).  If the
  mini-batches are of size $N/r,$ then it is reasonable to choose the
  scale-down factor $r$ to be of the same order of magnitude as
  $(1-q)/(1-p).$ The rationale behind this choice is to subject the
  estimation task (that is, to estimate $F^{\leftarrow}(q)$ with $N/r$
  samples) in cross-validation mini-batches to the same level of
  statistical difficulty as in our original task (which is to estimate
  $F^{\leftarrow}(p)$ with $N$ samples).\\
  We repeat the above experiment for small increments of $\alpha$ to
  identify the largest value of $\alpha$ for which the robust
  estimates obtained from the $K$ sub-problems still cover the plug-in
  estimate for $F^{\leftarrow}(q)$ obtained initially from the
  empirical distribution. We utilize this largest value of $\alpha$
  that performs well in the scaled-down sub-problems to be the choice
  of $\alpha$ for robust estimation of $F^{\leftarrow}(p).$\\
  The third approach avoids using the upper end-point of a confidence
  interval of $\gamma$ to pick $\alpha$. Instead it incorporates a
  trade-off between the choice of $\alpha$ and $\delta$. Estimating
  $\delta$ requires the estimation of the R\'{e}nyi divergence, which
  is typically handled by $k$-NN methods as explained in Remark
  \ref{remark_knn}.  Large values of $\alpha$ may be desirable because
  they generate better upper bounds, but since
  $\alpha\rightarrow D_{\alpha}$ is nondecreasing as mentioned in
  Section \ref{SEC-DIV-MEAS}, it also requires large neighborhoods to
  include the true distribution
and hence large values of $\delta$. Further, by Theorem \ref{EVT-RENYI} if the true distribution has heavier tail than the chosen GEV model, then there does exist a threshold of $\alpha$ over which the neighborhoods will not include the true distribution or any other distributions with the same or more tail heaviness than the true distribution, regardless of how large $\delta$ is. Therefore when the chosen $\alpha$ is so large that the true distribution has the tail with an index greater than $\gamma^\ast$, any attempt to estimate such $\delta$ will be unstable and underestimated and causes the failure of coverage for true quantile. 
The above
cross-validation-like procedure incorporates this trade-off and picks a suitable pair $(\alpha, \delta)$.
Example \ref{Petersburg} gives the corresponding
numerical experiments using this approach. 

\end{itemize}

\subsection{\textbf{Numerical examples}}
\begin{example}
  \textnormal{For a demonstration of the ideas introduced, we consider
    the rainfall accumulation data, due to the study of
    \cite{Coles_rainfall}, from a location in south-west England (see
    also \cite{MR1932132} for further extreme value analysis with the
    dataset). Given annual maxima of daily rainfall accumulations over
    a period of 48 years (1914-1962), we attempt to compute, for
    example, the 100-year return level for the daily rainfall data. In
    other words, we aim to estimate the daily rainfall accumulation
    level that is exceeded about only once in 100 years. As a first
    step, we calibrate a GEV model for the annual
    maxima. Maximum-likelihood estimation of parameters results in the
    following values for shape, scale and location parameters:
    $\gamma_0 = 0.1072, \ a_0 = 9.7284$ and $b_0 = 40.7830.$ The
    100-year return level due to this model yields a point estimate
    98.63mm with a standard error of $\pm 17.67$mm (for 95\%
    confidence interval). It is instructive to compare this with the
    corresponding estimate $106.3 \pm 40.7$mm obtained by fitting a
    generalized Pareto distribution (GPD) to the large exceedances
    (see Example 4.4.1 of \cite{MR1932132}). To illustrate our
    methodology, we pick $\alpha = 2,$ as suggested in
    \eqref{CHOOSING-ALPHA}. Next, we obtain $\delta = 0.05$ as an
    empirical estimate of divergence $D_\alpha$ between the data
    points representing annual maxima and the calibrated GEV model
    $\Pg = G_{\gamma_0}(a_0 x + b_0).$ This step is accomplished using
    a simple $k$-nearest neighbor estimator (see
    \cite{poczos2011estimation}).  Consequently, the worst-case
    quantile estimate over all probability measures satisfying
    $D_\alpha(P, \Pg) \leq \delta$ is computed to be
    $F_\alpha^{\leftarrow}(1-1/100) = 132.24$mm. While not being overly
    conservative, this worst-case 100 year return level of 132.44mm
    also acts as an upper bound to estimates obtained due to different
    modelling assumptions (GEV vs GPD assumptions). To demonstrate the
    quality of estimates throughout the tail, we plot the return
    levels for every $1/(1-p)$ years, for values of $p$ close to 1, in
    Figure \ref{FIG-NUM-EG}(a). While the return levels predicted by
    the GEV reference model is plotted in solid line (with the dash-dot
    lines representing 95\% confidence intervals), the dotted curve
    represents the worst-case estimates $F_\alpha^{\leftarrow}(p).$
    The empirical quantiles are drawn in the dashed line. }
  \label{EG-RAINFALL-DATA}
\end{example}

\setcounter{subfigure}{0}
\begin{figure}[h!]
\begin{center}
  \caption{Plots for Examples \ref{EG-RAINFALL-DATA} and
    \ref{EG-SYNTHETIC-DATA}}\label{FIG-NUM-EG} 
  \subfigure[Quantile plots for rainfall data, Eg.
  \ref{EG-RAINFALL-DATA}]{
    \includegraphics[scale=0.25]{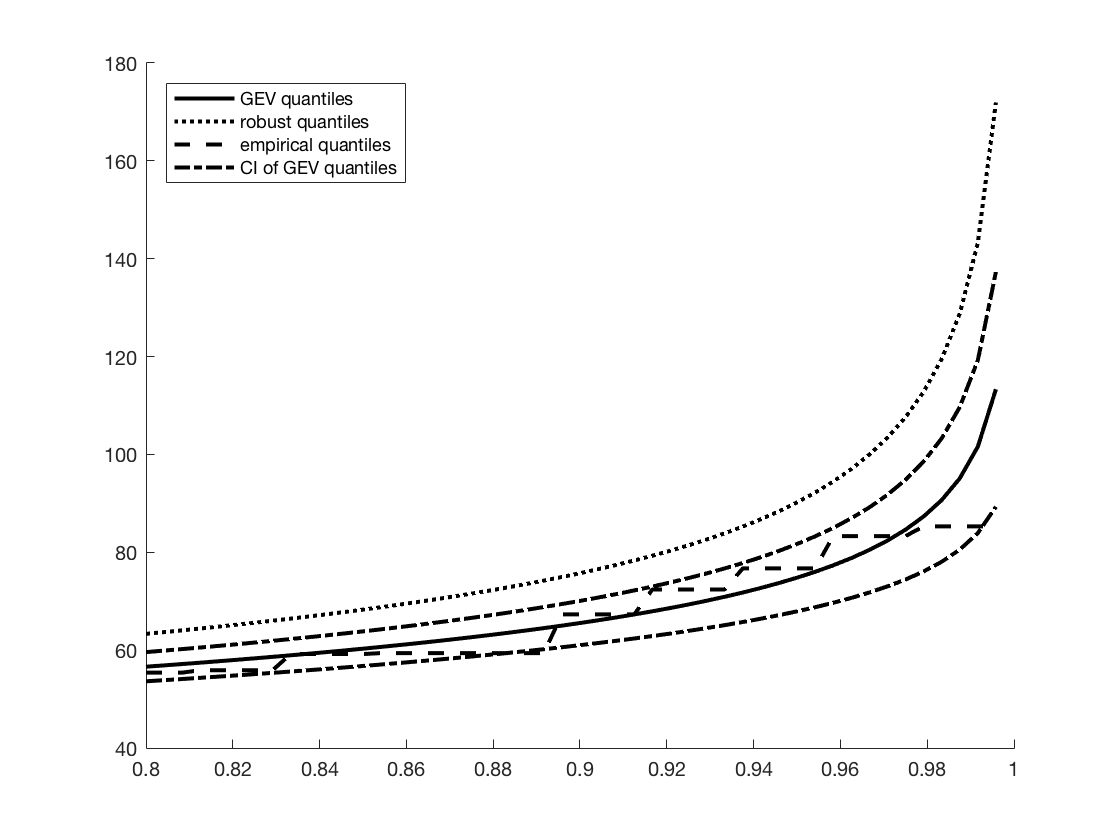}} \hspace{1cm}
  \subfigure[Quantile plots for Pareto data, Eg.
  \ref{EG-SYNTHETIC-DATA}]{
  \includegraphics[ scale = 0.25]{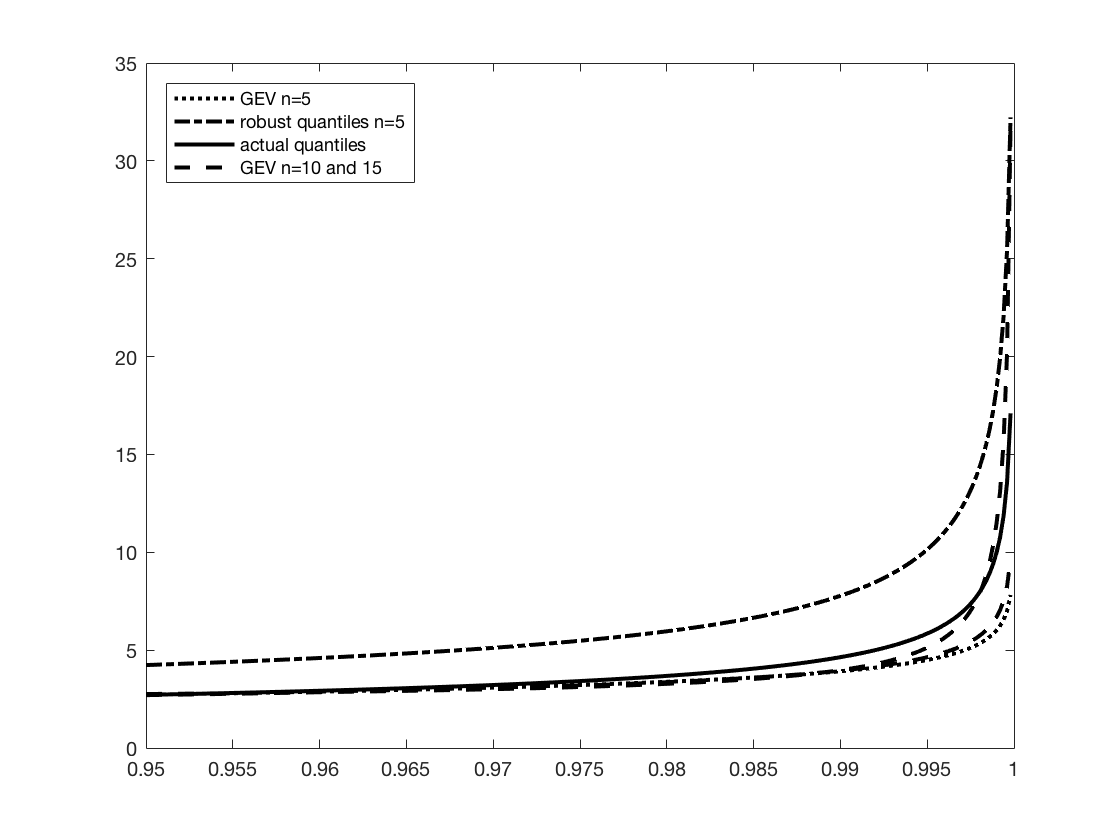}} 
\end{center}
\end{figure}

\begin{example}
  \textnormal{In this example, we are provided with 100 independent
    samples of a Pareto random variable satisfying
    $P\{ X > x\} = 1- F(x) = 1 \wedge x^{-3}.$ As before, the
    objective is to compute quantiles $F^{\leftarrow}(p)$ for values
    of $p$ close to 1. As the entire probability distribution is known
    beforehand, this offers an opportunity to compare the quantile
    estimates returned by our algorithm with the actual quantiles.
    Unlike Example \ref{EG-RAINFALL-DATA}, the data in this example
    does not present a natural means to choose block sizes. As a first
    choice, we choose block size $n = 5$ and perform routine
    computations as in Algorithm \ref{ALGO-ROB-VAR} to obtain a
    reference GEV model $\Pg$ with parameters
    $\gamma_0 = 0.11, a_0 = 0.58, b_0 = 1.88,$ and corresponding
    tolerance parameters $\alpha = 1.5$ and $\delta = 0.8.$ Then the
    worst-case quantile estimate
    $F_\alpha^{\leftarrow}(p^n) = \sup\{ G^{\leftarrow}(p^n) :
    D_\alpha(G, \Pg) \leq \delta\}$
    is immediately calculated for various values of $p$ close to 1,
    and the result is plotted (in the dotted line) against the true quantiles
    $F^{\leftarrow}(p) = (1-p)^{-1/3}$ (in the solid line) in Figure
    \ref{FIG-NUM-EG}(b). These can, in turn, be compared with the
    quantile estimates $x_p$ (in the solid line) due to traditional GEV
    extrapolation with reference model $\Pg.$ Recall that the initial
    choice for block size, $n = 5,$ was arbitrary. One can perhaps
    choose a different block size, which will result in a different
    model for corresponding block-maximum $M_n.$ For example, if we
    choose $n = 10,$ the respective GEV model for $M_{10}$ has
    parameters $\gamma_0 = 0.22, a_0 = 0.55$ and $b_0 = 2.3.$ Whereas,
    if we choose $n = 15,$ the GEV model for $M_{15}$ has parameters
    $\gamma_0 = 0.72, a_0= 0.32 \text{ and } b_0 = 2.66.$ When
    considering the shape parameters, these models are different, and
    subsequently, the corresponding quantile estimates (plotted using
    dashed lines in Figure \ref{FIG-NUM-EG}(b)) are also
    different. However, as it can be inferred from Figure
    \ref{FIG-NUM-EG}(b), the robust quantile estimates (in the dotted line)
    obtained by running Algorithm \ref{ALGO-ROB-VAR} forms a good
    upper bound to the actual quantiles $F^{\leftarrow}(p),$ as well
    as to the quantile estimates due to different GEV extrapolations
    from different block sizes $n = 10$ and 15.}
\label{EG-SYNTHETIC-DATA}
\end{example}

\begin{example}
  \textnormal{The objective of this example is to demonstrate the applicability of
  Algorithm \ref{ALGO-ROB-VAR} in an instance where the traditional
  extrapolation techniques tend to not yield stable estimates. For
  this purpose, we use N = 2000 independent samples of the random
  variable $Y = X + 50 \mathbf{1}(X > 5)$ as input to the maximum
  likelihood based GEV model estimation, with the aim of calculating
  the extreme quantile $F^{\leftarrow}(0.999).$ Here, $F$ denotes the
  distribution function of random variable $Y,$ and $X$ is a Pareto
  random variable with distribution $\max(1-x^{-1.1},0).$ The quantile
  estimates (and the corresponding 95\% confidence intervals) output
  by this traditional GEV estimation procedure, for various choices of
  block sizes, is displayed with the solid line in Figure \ref{FIG-NO-DA}. Even
  for modestly large block size choices, it can be observed that the
  95\% confidence regions obtained from the calibrated GEV models are
  far below the true quantile drawn in the dashed line. This underestimation is
  perhaps because of the sudden shift of samples of block-maxima $M_n$
  from a value less than 5 to a value larger than 55 (recall that the
  distribution $F$ assigns zero probability to the interval
  $(5,55)$).} 

\label{EG-NO-DA}
\end{example}
\begin{figure}[th]
  \begin{center}
    \caption{Plot for Example \ref{EG-NO-DA}, instability in estimated
    quantile $F^{\leftarrow}(0.999)$}
      \includegraphics[scale = 0.25]{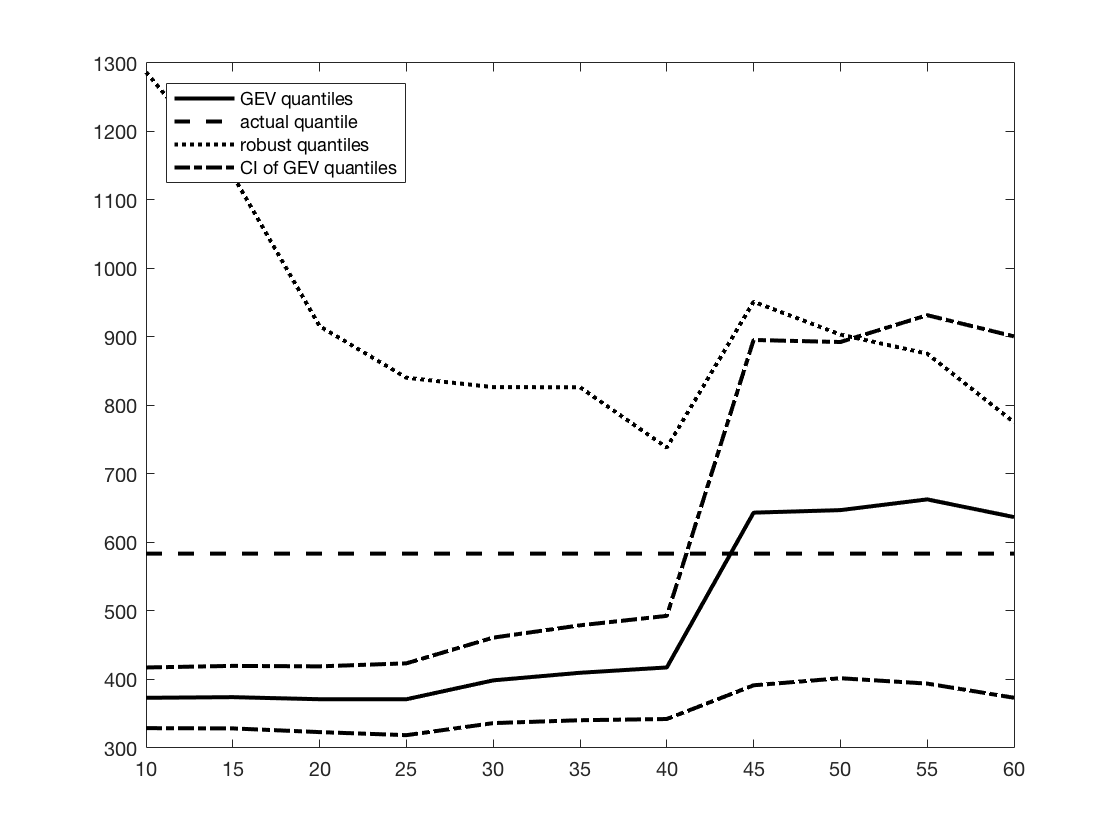}
    \label{FIG-NO-DA} 
  \end{center}
\end{figure}
 
    Next, we use Algorithm \ref{ALGO-ROB-VAR} to yield an upper bound
    that is robust against model errors. Unlike previous examples
    where standard errors are used to calculate the suitable $\alpha,$
    in this example, we use the domain knowledge that the samples of
    $Y$ have finite mean, which means, $\gamma^{\ast} \leq 1.$
    Assuming no additional information, we resort to the conservative
    choice $\gamma^{\ast} = 1.$ The dashed curve in Figure
    \ref{FIG-NO-DA} corresponds to the upper bound on
    $F^{\leftarrow}(0.999)$ output by Algorithm \ref{ALGO-ROB-VAR}. We
    note the following observations: First, the worst case estimates
    output by Algorithm \ref{ALGO-ROB-VAR} indeed act as an upper
    bound for the true quantile (drawn in solid line), irrespective of the
    block-size chosen and the baseline GEV model
    used. 
    Second, for block-sizes smaller than $n = 45,$ it appears that the
    calibrated baseline GEV models are not representative enough of
    the distribution of $M_n,$ and hence higher the value of $\delta$
    for these choices of block sizes. Understandably, this results in
    a conservative worst case estimate for the smaller choices of
    block sizes.  However, we argue that the overall procedure is not
    discouragingly conservative, by observing that the spread of 95\%
    confidence region for block size choices $n = $ 50 to 60 (where
    the traditional GEV calibration appears correct) is comparable to
    the difference between the true quantile and the worst-case
    estimate produced by Algorithm \ref{ALGO-ROB-VAR} for majority of
    block size choices (from $n =$ 20 to 60).

\begin{example}\label{Petersburg}
  \textnormal{In this example we consider the St. Petersburg distribution, which
  is not in the maximum domain of attraction of any GEV distribution
  (see e.g. \cite{fukker2016}). Recall that $X$ is St.Petersburg
  distributed if}
\begin{equation}
  P\{X=2^k\} = 2^{-k},\quad k=1,2,\dots
\end{equation}


\textnormal{Note that the St. Petersburg distribution takes large values with tiny
probability. Let $B$ denote a Bernoulli random variable with parameter
$1/5$. In addition let $W$ be exponentially distributed with mean $8$
and define $Z = B\cdot X+W.$ Suppose we have 5000 data points from the
distribution of $Z$. Similar to the previous example, we want to
estimate its quantile $F^{\leftarrow}(0.999)$. }

\textnormal{Here we demonstrate another approach to choose the parameter
$\alpha$. The idea, as described earlier in Item 3) is to first choose
a tail probability level $q$ for which $F^{\leftarrow}(q)$ can be
accurately estimated from the whole data set. For our example, we take
$q = 0.99$ and compute the plug-in estimate $F^{\leftarrow}(q)$ from
the empirical distribution. Then we independently divide the given
data set uniformly at random into $10$ batches each of size $625$
samples (corresponding to a scale-down factor = 8).  We employ the
procedure \textsc{Rob-Estimator}$(q,\alpha)$ for various values of
$\alpha$ on each of these 10 sub-sampled mini-batches independently,
and choose the largest value of $\alpha$ such that the robust
estimates from each of the 10 sub-samples cover the earlier plug-in
estimate $F^{\leftarrow}(0.99)$. The specific details for this example
are as follows:}

\textnormal{1) The plug-in estimate for $F^{\leftarrow}(0.99)$ from the given 5000
samples is 44.9. Note that with 5000 samples, this estimate from
empirical distribution is with reasonably high confidence. }

\textnormal{2) Resample the data into 10 mini-batches of size 5000/8 = 625
samples. With blocksize = 20 we utilize the procedure
\textsc{Rob-Estimator}$(0.99,\alpha)$ on each of the 10 mini-batches
to choose the largest $\alpha$ such that the respective robust
estimates from all the 10 sub-sampled mini-batches cover the empirical
estimate of $F^{\leftarrow}(0.99)$ obtained from step 1). This
approach leads us to the choice of $\alpha = 4.47$. Computing block maxima
from blocks of samples with size = 48, the subsequent robust upper bound
from the procedure \textsc{Rob-Estimator}$(0.999,4.47)$ turns out to
be $652.90,$ which covers the true quantile,
$F^{\leftarrow}(0.999)= 268.27.$ In contrast, the $95\%$-confidence
interval of GEV estimate is $[93.81, 201.60]$, which fails to cover
the true quantile.}

\textnormal{This approach incorporates the
trade-off between the choice of $\alpha$ and $\delta$.
Large values of $\alpha$ may be
desirable because they generate less conservative upper bounds. But Step 2) avoids picking too large values of $\alpha$, because too large values of $\alpha$, combined with the corresponding estimators for $\delta$ empirically do not lead to good coverage for $F^{\leftarrow}(0.99)$. 
Therefore this cross-validation-like procedure automatically incorporates the trade-off between the choice of hyperparameters $\alpha$ and $\delta$. }
\end{example}

\section{Proofs of main results} 
\label{SEC-THM-PROOF}
\noindent 
In this section, we provide proofs of Theorems \ref{EVT-GEV-RENYI} and
\ref{EVT-RENYI}, along with proofs of Propositions
\ref{Prop-Fa-tail-cdf}, \ref{PROP-EVT-KL} and \ref{PROP-KL}.

\subsection{Proof of Proposition \ref{Prop-Fa-tail-cdf}}
  By definition, $F_{\alpha,\delta}(x)$ is non-decreasing in $x.$
  Since $F_{\alpha,\delta}(x) \leq P_{ref}(-\infty,x),$ we have
  $\lim_{x \rightarrow -\infty}F_{\alpha,\delta}(x) = 0.$ In addition,
  we have from Corollary \ref{COR-TP-SOLN-CHAR} that
  $\bar{F}_{\alpha,\delta}(x) = \theta_x P_{ref}(x, \infty),$ where
  $\theta_x$ satisfies \eqref{THETA-SOLN-EQN}. Since
  $P_{ref}(x,\infty)\phi_{\alpha}(\theta_x) \leq \bd$ (follows from
  \eqref{THETA-SOLN-EQN}), we have
  $\theta_x \leq \phi_{\alpha}^{-1}(\bd/P_{ref}(x,\infty)),$ where
  $\phi_{\alpha}^{-1}(\cdot)$ is the inverse function of
  $\phi_\alpha(\cdot)$ (recall the defintion of $\phi_\alpha(\cdot)$
  in \eqref{DEFN-PHI-BD} to see that the inverse is well-defined for
  every $\alpha \geq 1$). As a result,
\begin{align}
  \Fa(x) \leq
  \phi_{\alpha}^{-1}\left(\frac{\bd}{P_{ref}(x,\infty)}\right)
  P_{ref}(x,\infty).
\label{SIMPLE-UB}
 \end{align}
 If we let $W(x)$ denote the product log function\footnote{$W$ is the
   inverse function of $f(x) = xe^x$}, then
 $\phi^{-1}_\alpha(u) = u^{-1/\alpha}$ when $\alpha > 1$ and
 $\phi^{-1}_\alpha(u) = u/W(u)$ when $\alpha = 1.$ Consequently for
 any $\alpha \geq 1,$
 $\epsilon \phi_{\alpha}^{-1}(1/\epsilon) \rightarrow 0$ as
 $\epsilon \rightarrow 0.$ As a result,
 $\lim_{x \rightarrow \infty}\Fa(x) = 0$ for any choice of
 $\alpha \geq 1$ and $\delta > 0.$ Thus
 $\lim_{x \rightarrow \infty} F_{\alpha,\delta}(x) = 1.$

To show that $F_{\alpha,\delta}(x)$ is right-continuous, we first see
that
\begin{align*}
  F_{\alpha,\delta}(x + \epsilon) - F_{\alpha,\delta} (x) &=
  \sup_{P:D_\alpha(P,P_{ref}) \leq \delta} P(x,\infty) -
  \sup_{P:D_\alpha(P,P_{ref}) \leq \delta} P(x + \epsilon,\infty)\\
  &\leq \sup_{P:D_\alpha(P,P_{ref}) \leq \delta} P(x,x+\epsilon], 
\end{align*}
for any $\epsilon > 0,$ for every choice of $\delta > 0, \alpha \geq
1$ and $P_{ref}.$ Following the same reasoning as in
\eqref{SIMPLE-UB}, we obtain that
\begin{align*}
  \sup_{P:D_\alpha(P,P_{ref}) \leq \delta} P(x,x+\epsilon] \leq
  \phi_{\alpha}^{-1}\left(\frac{\bd}{P_{ref}(x,x+\epsilon]}\right)
  P_{ref}(x,x+\epsilon],
 \end{align*} 
for which the right hand side vanishes when $\epsilon \rightarrow 0.$
As a result, $F_{\alpha,\delta}(x)$ is right-continuous as well, thus
verifying all the properties required to prove that
$F_{\alpha,\delta}(\cdot)$ is a cumulative distribution function. \hfill$\Box$

\subsection{Proofs of Theorems \ref{EVT-GEV-RENYI} - \ref{EVT-RENYI}}
The following technical result, Lemma \ref{lem:h-fn-alpha-ge-1}, is
useful for proving Theorem \ref{EVT-RENYI}. Given $\alpha \geq 1,$
$u \in (0,1)$ and $\bd < \delta_{thr}(u),$ define
\begin{align}
  h(u) := u\theta(u), 
  \label{defn-h}
\end{align}
where $\theta(u)$ is a value of $\theta$ satisfying
\begin{align}
  u\theta^\alpha + (1-u)\left( \frac{1-\theta u}{1-u}\right)^\alpha = \bar{\delta}.
  \label{defn-theta}
\end{align}

\begin{lemma}
  For any $\alpha > 1$ and $\bd > 1,$
  \begin{align*}
    \lim_{u \searrow 0} \frac{\theta(u)}{h^\prime(u)} = \frac{\alpha}{\alpha-1} \quad\text{ and }\quad \lim_{u \searrow 0} \frac{h(u)h^{\prime\prime}(u)}{\left( h^\prime(u)\right)^2} = -\frac{1}{\alpha-1}.
  \end{align*}
  \label{lem:h-fn-alpha-ge-1}
\end{lemma}
\begin{proof}[Proof of Lemma \ref{lem:h-fn-alpha-ge-1}]
  For $u \in (0,1),$ $\theta(u)$ satisfying \eqref{defn-theta}
  exists if $u$ is small enough such that
  $\delta_{thr}(u) := u\phi_\alpha(1/u) \geq \bd$ (see Corollary
  \ref{COR-TP-SOLN-CHAR}). For all such small enough $u,$ an
  application of implicit function theorem gives that,
    \begin{align*}
  \theta^{\prime}(u) = \frac{(1-u)^{\alpha}\theta^{\alpha}(u)-(1-u\theta(u))^{\alpha}+\alpha(1-\theta(u))(1-u\theta(u))^{\alpha-1}}{\alpha(1-u)u[(1-u\theta(u))^{\alpha-1}-(1-u)^{\alpha-1}\theta^{\alpha-1}(u)]},
    \end{align*}
    and consequently, 
\begin{align*}
  h^{\prime}(u) = \frac{(\alpha-1)[(1-u\theta(u))^{\alpha}-(1-u)^{\alpha}\theta^{\alpha}(u)]}{\alpha(1-u)[(1-u\theta(u))^{\alpha-1}-(1-u)^{\alpha-1}\theta^{\alpha-1}(u)]}.
\end{align*}
Since $u\theta(u) \leq u\phi_\alpha^{-1}(\bd/u)$ (see
\eqref{SIMPLE-UB}), we have $u\theta(u) \rightarrow 0$ as
$u \searrow 0.$ Moreover, since
$\theta (u)\geq (\bd - (1-u)^{-(\alpha-1)})/u$ (from \eqref{defn-theta}),
we have that $\theta(u) \rightarrow \infty,$ as $u \searrow 0.$
Combining these observations with the above expression for
$h^\prime(u),$ we arrive at the first conclusion that
$\lim_{u \searrow 0} \theta(u)/h^\prime(u) = \alpha/(\alpha-1).$

To verify the second limiting statement, we proceed by rewriting as
follows: 
\begin{align*}
  \frac{h(u)h^{\prime\prime}(u)}{\left(h^\prime(u)\right)^2}
  =
  \frac{\theta(u)}{h^\prime(u)}\frac{h^{\prime\prime}(u)}{\theta^{\prime}(u)}\frac{u\theta^{\prime}(u)}{h^\prime(u)}
\end{align*}
We know from the above that ${\theta(u)}/{h^\prime(u)}$ converges to
${\alpha}/(\alpha-1)$, as $u \searrow 0;$ and by l'H\^{o}spital's 
rule, we have ${h^{\prime\prime}(u)}/{\theta^{\prime}(u)}$
converges to $(\alpha-1)/\alpha.$ Finally,
\begin{align*}
  \frac{u\theta^{\prime}(u)}{h^\prime(u)}
  &=\frac{u\theta^{\prime}(u)}{\theta(u)+u\theta^{\prime}(u)}\\
  &= \frac{(1-u)^{\alpha}\theta^{\alpha}(u)-(1-u\theta(u))^{\alpha}+\alpha(1-\theta(u))(1-u\theta(u))^{\alpha-1}}{(\alpha-1)[(1-u\theta(u))^{\alpha}-(1-u)^{\alpha}\theta^{\alpha}(u)]},
\end{align*}
which converges to $-\frac{1}{\alpha-1}$, since
$u\theta(u)\rightarrow 0$ as $u\searrow 0$.Combining the above
observations, the verification of the second conclusion that
$ {h(u)h^{\prime\prime}(u)}/{(h^\prime(u))^2} \rightarrow
-1/(\alpha-1)$ is complete.
\hfill$\Box$
\end{proof}

\subsubsection*{Proof of Theorem \ref{EVT-RENYI}}
Our goal is to determine the maximum domain of attraction membership
of $\Fa(x) = \sup\{ P(x,\infty): D_\alpha(P,\Pref) \leq \delta\}.$ For
brevity, let $\bar{F}(x) := \Pref(x,\infty).$ Then for values of $x$
such that $\Pref(x,\infty)$ small enough, we have from Corollary
\ref{COR-TP-SOLN-CHAR} that $\Fa(x) = h(\bar{F}(x)).$ Since
$\bar{F}(\cdot)$ satisfies the regularity conditions in the statement
of Proposition \ref{PROP-EXTREMES-CHAR}, we have
  \begin{align}
    \lim_{x \uparrow x_F^\ast}  \frac{\bar{F}(x)\bar{F}^{\prime\prime}(x)}{\left(\bar{F}^\prime(x)\right)^2} = \gamma_{ref}+1, 
  \label{term-1-dash}
\end{align}
and the following from elementary calculus: 
  \begin{align*}
    \Fa^\prime(x) &= {h}^\prime(\bar{F}(x))\bar{F}^\prime(x) \text{ and} \nonumber \\
    \Fa^{\prime\prime}(x) &= h^{\prime\prime}(\bar{F}(x)) (\bar{F}^\prime(x))^2 + h^\prime(\bar{F}(x))\bar{F}^{\prime\prime}(x).
  \end{align*}
  Combining these observations with the definition in \eqref{defn-h},
  we arrive at,
  \begin{align}
    \frac{\Fa(x) \Fa^{\prime\prime}(x)}{\left(\Fa^\prime(x)\right)^2} = \frac{h(\bar{F}(x))h^{\prime\prime}(\bar{F}(x))}{\left(h^\prime(\bar{F}(x))\right)^2} + \frac{\theta(\bar{F}(x))}{h^\prime(\bar{F}(x))} \left( \frac{\bar{F}(x)\bar{F}^{\prime\prime}(x)}{\left(\bar{F}^\prime(x)\right)^2}\right).
 \label{diff-term}
  \end{align}
  Since $\bar{F}(x) \rightarrow 0$ as $x \rightarrow x_F^\ast,$ it follows
  from Lemma \ref{lem:h-fn-alpha-ge-1}, \eqref{diff-term}
  and \eqref{term-1-dash} that,
  \begin{align*}
    \lim_{x \uparrow x_F^\ast} &\left(\frac{1-F_{\alpha,\delta}}{F_{\alpha,\delta}^\prime}\right)^\prime(x)
    = \lim_{x \uparrow x_F^\ast}  \frac{\Fa(x) \Fa^{\prime\prime}(x)}{\left(\Fa^\prime(x)\right)^2} - 1\\
    &= \lim_{u \searrow 0} \frac{h(u)h^{\prime\prime}(u)}{\left(h^\prime(u)\right)^2} + \lim_{u \searrow 0} \frac{\theta(u)}{h^\prime(u)}\lim_{u \searrow 0}\frac{\bar{F}(x)\bar{F}^{\prime\prime}(x)}{\left(\bar{F}^\prime(x)\right)^2} - 1\\
    &= -\frac{1}{\alpha-1} + \frac{\alpha}{\alpha-1} \left(\gamma_{ref}+1 \right) - 1
    = \frac{\alpha}{\alpha-1} \gamma_{ref}.
  \end{align*}
  Thus, due to the characterization in Proposition
  \ref{PROP-EXTREMES-CHAR}, we have that $F_{\alpha,\delta}$ lies in the maximum
  domain of attraction of $G_{\gamma^\ast}.$ \hfill$\Box$

\subsubsection*{Proof of Theorem \ref{EVT-GEV-RENYI}}
Theorem \ref{EVT-GEV-RENYI} follows as a simple corollary of Theorem
\ref{EVT-RENYI}, once we verify that any GEV model
$G(x) := \Pg(-\infty,x]$ satisfies $G'(x) > 0$ and
$G^{\prime \prime}(x)$ exists in a left neighborhood of
$x_G^* = \sup\{ x: G(x) < 1\},$ along with the property that
\[ \lim_{x \uparrow x_G^*}\left(\frac{1-G}{G'}\right)'(x) = \gref, \]
where $\gref$ is the shape parameter of $G.$ Such a GEV model
satisfies $G(x) = G_{\gamma_{\textnormal{ref}}} (ax + b)$ for some
scaling and translation constants $a$ and $b.$ Therefore, it is enough
to verify these properties only for
$G(x) = G_{\gamma_{\textnormal{ref}}}(x).$ Once we recall the
definition of $G_\gamma$ in \eqref{EV-DISTS}, the desired properties
are elementary exercises in calculus. \hfill$\Box$

\subsection{{Proofs of Propositions \ref{PROP-EVT-KL} -  \ref{PROP-KL}}}
Given $u \in (0,1)$ and $\bd < \delta_{thr}(u),$ let $\theta(u)$ be a
value of $\theta$ that solves the equation,
  \begin{align}
    u\theta\log\theta + (1-\theta u) \log \frac{1-\theta u}{1-u} = \bd.
    \label{defn-theta-eq-1}
  \end{align}
  Define $h(u) := u\theta(u)$ (as in the proof of Theorem
  \ref{EVT-RENYI}, see \eqref{defn-h}).  The following technical result,
  Lemma \ref{lem:h-fn-alpha-eq-1}, is useful for proving Proposition
  \ref{PROP-EVT-KL}.
\begin{lemma}
  For any $\bd > 1,$
  \begin{align*}
    \lim_{u \searrow 0} \frac{\theta(u)}{h^\prime(u)} + \frac{h(u)h^{\prime\prime}(u)}{\left(h^\prime(u)\right)^2} = 2. 
  \end{align*}
  \label{lem:h-fn-alpha-eq-1}
\end{lemma}
\begin{proof}[Proof of Lemma \ref{lem:h-fn-alpha-eq-1}]
    For $u \in (0,1),$ $\theta(u)$ satisfying \eqref{defn-theta}
  exists if $u$ is small enough such that
  $\delta_{thr}(u) := u\phi_\alpha(u) \geq \bd$ (see Corollary
  \ref{COR-TP-SOLN-CHAR}). For all such small enough $u,$ an
  application of implicit function theorem gives that,
  \begin{align*}
  \theta^\prime(u) = \frac{\theta(u) - 1}{u(1-u)L(u)} - \frac{\theta(u)}{u}, \quad \text{ where }   L(u) := \log \frac{\theta(u)(1-u)}{1-u\theta(u)}. 
\end{align*}
\end{proof}
Since $h^\prime(u) = \theta(u) + u\theta^\prime(u),$ it follows that, 
\begin{align}
  \frac{\theta(u) - 1}{h^\prime(u)} = L(u)(1-u).
  \label{inter-prop-eq1-0}
\end{align}
Differentiating both sides and multiplying by $h(u),$ we obtain,
\begin{align*}
  \frac{h(u)\theta^\prime(u)}{h^\prime(u)} - \left( \theta(u) - 1\right) \frac{h(u)h^{\prime\prime}(u)}{\left( h^\prime(u)\right)^2} = u\theta(u)\left( L^\prime(u)(1-u) - L(u)\right). 
\end{align*}
Since the first term in the left hand side above simplifies to
\begin{align*}
  \frac{h(u)\theta^\prime(u)}{h^\prime(u)} = \theta(u) \left( 1 - \frac{1-u}{1-1/\theta(u)}L(u)\right), 
\end{align*}
we obtain that,
\begin{align}
  \left( 1 - \frac{1}{\theta(u)}\right) \frac{h(u)h^{\prime\prime}(u)}{\left( h^\prime(u)\right)^2 } = 1-\frac{(1-u)L(u)}{1-1/\theta(u)} + u L(u) - u(1-u)L^\prime(u).
  \label{inter-prop-eq1-1}
\end{align}
Differentiating $L(u),$ we obtain,
\begin{align*}
  \left( 1- u\theta(u)\right)L^\prime(u) = \frac{\theta^\prime(u)}{\theta(u)} + \frac{\theta(u) - 1}{1-u}. 
\end{align*}
Substituting this observation in \eqref{inter-prop-eq1-1}, we obtain
\begin{align*}
  &\left( 1 - \frac{1}{\theta(u)}\right) \frac{h(u)h^{\prime\prime}(u)}{\left( h^\prime(u)\right)^2 }\\
  &\qquad= 1 + \left(u - \frac{1-u}{1-1/\theta(u)} \right)L(u) - \frac{1-u}{1-u\theta(u)} \left( \frac{u\theta^\prime(u)}{\theta(u)} + \frac{u(\theta(u)-1)}{1-u}\right). 
\end{align*}
Combining this observation with that in \eqref{inter-prop-eq1-0}, we obtain,
\begin{align*}
  &\left( 1 - \frac{1}{\theta(u)}\right)\left(\frac{\theta(u)}{h^\prime(u)} + \frac{h(u)h^{\prime\prime}(u)}{\left( h^\prime(u)\right)^2 }\right)\\
  &\qquad = 1- \frac{1-u\theta(u)}{\theta(u)-1}L(u) - \frac{1-u}{1-u\theta(u)}\left( \frac{1-1/\theta(u)}{(1-u)L(u)} - 1 + \frac{u(\theta(u)-1)}{1-u}\right). 
\end{align*}
Since $u\theta(u) \leq u\phi_1^{-1}(\bd/u)$ (see \eqref{SIMPLE-UB}),
we have $u\theta(u) \rightarrow 0$ as $u \searrow 0.$ Moreover, since
$\theta (u)\geq \phi^{-1}((\bd + \log(1-u))/u)$ (from
\eqref{defn-theta-eq-1}), we have that $\theta(u) \rightarrow \infty,$
as $u \searrow 0.$ Therefore, we have from the above displayed
equation that,
\begin{align*}
  &\lim_{u \searrow 0} 1 \times \left(\frac{\theta(u)}{h^\prime(u)} + \frac{h(u)h^{\prime\prime}(u)}{\left( h^\prime(u)\right)^2 }\right)\\
  &\qquad=  1 -  (1-0)\lim_{u \searrow 0} \frac{L(u)}{\theta(u) - 1} - \frac{1-0}{1-0}\left(\frac{1-0}{1 \times \lim_{u \searrow 0}L(u)} - 1 - \frac{0}{1} \right). 
\end{align*}
It follows from the definition of $L(u)$ that
$L(u) \rightarrow \infty$ as $u \searrow 0;$ due to L'H\^{o}spital's
rule, we also obtain $\lim_{u \searrow 0} L(u)/(\theta(u) - 1) = 0.$
This verifies the statement of Lemma \ref{lem:h-fn-alpha-eq-1}. \hfill$\Box$

\subsubsection*{Proof of Proposition \ref{PROP-EVT-KL}}
Our objective is to identify the maximum domain of attraction
memberiship of the tail probability function,
\[\bar{F}_{1,\delta}(x) := \sup\{ P(x,\infty): D_1(P,G_0) \leq \delta\}.\] For
brevity, let $\bar{G}_0(x) := 1-G_0(x).$ Then for values of $x$ such
that $\bar{G}_0(x)$ small enough, we have from Corollary
\ref{COR-TP-SOLN-CHAR} that $\bar{F}_{1,\delta}(x) = h(\bar{G}_0(x)).$ Since
$\bar{G}_0(\cdot)$ satisfies the regularity conditions in the
statement of Proposition \ref{PROP-EXTREMES-CHAR}, we have
  \begin{align}
    \lim_{x \uparrow \infty}  \frac{\bar{G}_0(x)\bar{G}_0^{\prime\prime}(x)}{\left(\bar{G}_0^\prime(x)\right)^2} = 1, 
  \label{term-2-dash}
  \end{align}
  Then, as in the proof of Theorem \ref{EVT-RENYI}, we have from Lemma
  \ref{lem:h-fn-alpha-eq-1}, \eqref{term-2-dash},\eqref{diff-term}
  that,
  \begin{align*}
    \lim_{x \rightarrow \infty}
    &\left(\frac{1-F_{1,\delta}}{F_{1,\delta}^\prime}\right)^\prime(x)
      = \lim_{x \rightarrow \infty}  \frac{\bar{F}_{1,\delta}(x) \bar{F}_{1,\delta}^{\prime\prime}(x)}{\left(\bar{F}_{1,\delta}^\prime(x)\right)^2} - 1\\
    &= \lim_{x \rightarrow \infty} \frac{h(\bar{G}_0(x))h^{\prime\prime}(\bar{G}_0(x))}{\left(h^\prime(\bar{G}_0(x))\right)^2} +  \frac{\theta(\bar{G}_0(x))}{h^\prime(\bar{G}_0(x))}\frac{\bar{G}_0(x)\bar{G}_0^{\prime\prime}(x)}{\left(\bar{G}_0^\prime(x)\right)^2}-1\\
    &= 2-1 = 1.
  \end{align*}
  Thus, due to the characterization in Proposition
  \ref{PROP-EXTREMES-CHAR}, we have that $F_{1,\delta}$ lies in
  the maximum domain of attraction of $G_{1}.$ \hfill$\Box$

\subsubsection*{Proof of Proposition \ref{PROP-KL}}
First, we treat the case $\gref = 0$: Consider
the probability density function
$f(x) = c(x\log x)^{-2}\mathbf{1}(x \geq 2),$ where $c$ is a
normalizing constant that makes $\int f(x)dx = 1.$ In addition, let
$g(x) = G_0'(x)$ denote the probability density function corresponding
to the distribution $G_0.$ Clearly,
\begin{align*}
  D_1(f,g) &= \int f(x) \log \left( \frac{f(x)}{g(x)}\right)dx\\
              &= c \int_2^\infty (x \log x)^{-2} \log \left( \frac{c(x \log
                x)^{-2}}{\exp(-\exp(-x)) \exp(-x)} \right)dx\\
              &\leq \int_2^\infty \frac{x + \exp(-x) + \log c}{x^2 \log^2x} dx < \infty. 
\end{align*}
Next, consider the family of densities $\{h_a : a \in [0,1]\},$ where
\begin{equation}
  h_a:= af + (1-a)g.
  \label{defn:ha}
\end{equation}

Since $D_1(h_0,g) = 0,$ due to the continuity
of $D_1(h_a, g)$ with respect to $a,$ there exists an
$\bar{a} \in (0,1)$ such that $D_1(h_{\bar{a}}, g) \leq \delta.$ Then, 
\begin{align*}
  \int_x^\infty h_{\bar{a}}(u)du &= \int_x^\infty \left(\bar{a}f +
  (1-\bar{a})g \right)(u)du\\
  &\geq \bar{a}\int_x^\infty
  \frac{c}{u^2\log^2u} du = \frac{\bar{a}c + o(1)}{x \log^2x},
\end{align*}
The asymptotic equivalence used above in the last equality is due to
Karamata's theorem (see Theorem 1 in Chapter VIII.9 of
\cite{MR0210154}). This demonstrates the existence of a probability
distribution $P$ and constants $c_0,x_0$ such that
$P(x,\infty) \geq c_0x^{-1}\log^{-2}x$ for all $x \geq x_0.$

Next, we treat the case $\gref \neq 0$: Consider the probability
measure $Q$ whose Radon-Nikodym derivative is given by,
\begin{align*}
  \frac{dQ}{dG_{\gref}}(x) = \phi_1^{-1}
  \left(\frac{c}{(1-G_{\gref}(x))(1-\log (1-G_{\gref}(x)))^2}\right),  
\end{align*}
for a suitable positive constant $c$. Here $\phi_1^{-1}(\cdot)$
denotes the inverse function of $\phi_1(x).$ Then
$D_1(Q,\Pref) < \infty$ because of the change of variable from $x$ to
$u$ via the relationship  $u = 1-G_{\gref}(x)$ in the integration
below:
\begin{align*}
  \int\phi_1 \left( \frac{d Q}{d G_{\gref}} \right)
  dG_{\gref} = \int_0^1 \frac{c}{u(1-\log u)^2}du < \infty.
\end{align*}

Let $g(x) := G_{\gref}^\prime(x)$ and $f$ denote the probability
density of the measure $Q.$ Consider the family of probability density
functions $\{h_a:a \in [0,1]\},$ where $h_a$ is defined in
$\eqref{defn:ha}.$ Since $D_1(h_0,g) = 0,$ due to the continuity of
$D_1(h_a, g)$ with respect to $a,$ there exists an $\bar{a} \in (0,1)$
such that $D_1(h_{\bar{a}}, g) \leq \delta.$ Moreover, if we let
$A(t) = \phi_1^{-1}(c(1-\log t)^{-2}/t),$ then observe that there
exists a $t_0$ such that $A(t)$ is decreasing in the interval
$(0,t_0).$ Therefore,
\begin{align*}
  \int_x^\infty h_{\bar{a}}(u)du 
  &\geq \bar{a}\int_x^\infty \frac{f(u)}{g(u)}g(u)du = \bar{a}\int_x^\infty A(1-G_{\gref}(u))g(u)du\\
  &\geq \bar{a}A(1-G_{\gref}(x))\int_x^\infty g(u)du = \bar{a}A(1-G_{\gref}(x)) (1-G_{\gref}(x)),
\end{align*}
for all $x$ large enough.  To proceed further, observe that
\[1-G_{\gamma_{\textnormal{ref}}}(x) \geq \bar{c}(1+\gref
  x)^{-1/\gref},\] for some constant $\bar{c} < 1$ and all $x$ close
enough to the right endpoint
$x^*_{_G} := \sup\{ x: G_{\gamma_{\textnormal{ref}}}(x) < 1\}.$ In
addition, $tA(t)$ strictly decreases to 0 as $t$ decreases to
0. Therefore, for all $x$ close to the right endpoint
$x^*_G := \sup\{ x: G_{\gamma_{\textnormal{ref}}}(x) < 1\},$ it
follows that
\begin{align*}
  \int_x^\infty h_{\bar{a}}(u)du \geq A\left(  \bar{c}(1+\gref
  x)^{-1/\gref} \right)  \bar{c}(1+\gref
  x)^{-1/\gref}. 
\end{align*}
Since $\phi_1^{-1}(u) \geq u/\log u$ for large enough $u,$
$A(t) \geq ac{ t^{-1}\left( 1-\log t \right)^{-2} \log^{-1} \left(
    c/t\right)},$
for all $t$ close to 0. As a result, there exists a constant
$c^\prime$ such that $tA(t) \geq c^\prime(1-\log t)^{-3}$ for all $t$
sufficiently close to 0. This allows us to write 
\begin{align*}
  \int_x^\infty h_{\bar{a}}(u)du 
    &\geq c^\prime (1-\log(\bar{c}(1+\gref x)^{-1/\gref}))^{-3}\\
  &=      c^\prime (1+ \log(\bar{c}^{1/\gref} (1+\gref x))/\gref)^{-3},
\end{align*}
for $x$ sufficiently close $x_{_G}^\ast,$ thus verifying the statement
in cases (a) and (c) where $\gref \neq 0.$
This completes the proof of Proposition \ref{PROP-KL}. \hfill$\Box$


\bibliography{Robustness}
\bibliographystyle{abbrv}

\begin{acknowledgements}
  J. Blanchet gratefully acknowledges support from
    NSF, grants 1436700, 1915967, 1820942, 1838676, Norges and as well
    as DARPA award N6. K. Murthy gratefully acknowledges support from
    MOE grant SRG ESD 2018 134.
\end{acknowledgements}

\end{document}